\numberwithin{equation}{section}
\theoremstyle{plain}
 \newtheorem{theorem}{Theorem}[section]
 \newtheorem{lemma}[theorem]{Lemma}
 \newtheorem{proposition}[theorem]{Proposition}
 \newtheorem{corollary}[theorem]{Corollary}
\newcommand \figwidthcoeff  {0.95} %Magnification of the figures: the coefficient of \textwidth
\newcommand \quotat[1] {``#1''}
\newcommand \Part [1] {\textup{PLat}(#1)}
\newcommand \Pfin [1] {\textup{Pfin}(#1)}
\newcommand \Equ [1] {\textup{Equ}(#1)}
\newcommand{\Nplu}{\mathbb{N}^+}
\newcommand{\Nnul}{\mathbb{N}_0}
\newcommand \set[1] {\{#1\}}
\newcommand \prt {\textup{prt}}
\newcommand \topA {\mathbf 1_{\Part A}}
\newcommand \botA {\mathbf 0_{\Part A}}
\newcommand \stskip{\bigskip} %To separate the statements and proofs 
\newcommand \onlyext[1]{#1}
\newcommand\semmi[1]{}
\begin{document}

\title[Four-element generating sets of partition lattices]
{Atoms in four-element generating sets of partition lattices}

\author[G.\ Cz\'edli]{G\'abor Cz\'edli}
\email{czedli@math.u-szeged.hu}
\urladdr{http://www.math.u-szeged.hu/~czedli/}
\address{University of Szeged, Bolyai Institute. 
Szeged, Aradi v\'ertan\'uk tere 1, HUNGARY 6720}

\keywords{Partition lattice, equivalence lattice, four-element generating, session key} 

\subjclass {06B99, 06C10}
% 06B (1980–now) Lattices
%  06B99 (1980–now) None of the above, but in this section
% 06C10 (1980–now) Semimodular lattices, geometric lattices

\date{\hfill \textbf{October 25, 2024}}

\dedicatory{This paper is dedicated to \'Eva, Gerg\H{o}, M\'arti, Andi, and M\'at\'e}

\begin{abstract}
Since Henrik Strietz's 1975 paper proving that the lattice Part($n$) of all partitions of an $n$-element finite set is four-generated,  more than half a dozen papers have been devoted to four-element generating sets of  this lattice.
We prove that each element of  Part($n$) with height one or two (in particular, each atom) belongs to a four-element generating set. Furthermore, our construction leads to a concise and easy proof of a 1996 result of the author stating that the lattice of partitions of a countably infinite set is four-generated 
%in complete-lattice sense. 
as a complete lattice.
In a recent paper \quotat{Generating Boolean lattices by few elements and exchanging session keys}, see \href{https://doi.org/10.30755/NSJOM.16637}{https://doi.org/10.30755/NSJOM.16637}, the author establishes a connection between cryptography and small generating sets of some lattices, including Part($n$). Hence, it is worth pointing out that by combining a construction given here 
with a recent paper by the author, 
\quotat{\emph{Four-element generating sets with block count width at most two in partition lattices}}, available at \href{https://tinyurl.com/czg-4gw2}{https://tinyurl.com/czg-4gw2}, we obtain many four-element generating sets of Part($n$).
\end{abstract}

\maketitle

\section{Introduction}
This paper belongs to lattice theory. Apart from understanding the definition of a sublattice of a lattice as an algebraic system, the paper requires no prior knowledge from the reader.

For a set $A$, the partitions of $A$ form a complete lattice, the \emph{\textbf partition \textbf{lat}tice} $\Part A$ of $A$. This lattice is isomorphic to the \emph{equivalence lattice} $\Equ A$ of $A$; in fact, it is the canonical bijective correspondence between the partitions and the equivalences of $A$ that defines the lattice order and so the lattice structure of $\Part A$: For partitions $\alpha,\beta\in\Part A$, $\alpha\leq \beta$ means that every pair in the equivalence relation determined by $\alpha$ belongs to the equivalence relation determined by $\beta$. As usual, a subset $X$ of $\Part A$ is a \emph{generating set} of $\Part A$ if no proper sublattice of $\Part A$ includes $X$ as a subset. Similarly, if no proper complete sublattice of $\Part A$ includes $X$, then we say that \emph{$X$ generates $\Part A$ as a complete lattice}.
For $k\in\Nplu:=\set{1,2,3,\dots}$, a lattice $L$ is $k$-generated if it has a $k$-element generating set. For $n\in\Nnul:=\set0\cup\Nplu$, we denote $\set{i\in\Nplu: i\leq n}$ by $[n]$, and we write $\Part n$ rather than $\Part{[n]}$.
By Strietz \cite{strietz75} and \cite{strietz77}, $\Part n$ is four-generated but not three-generated provided that $4\leq n\in\Nplu$. 
Since his papers, more than half a dozen papers have been devoted to the four-element generating sets of partitions lattices; see the historical mini-survey subsection in \cite{czgsesskey}.  See also the other papers in the bibliographic section of the present paper, in particular, see \cite{czgirk},  \cite{czgerl24}, and \cite{czgpairhor} for the most recent developments.

 %\subsection{A recent motivation}\label{subsect:recent}

To present a part of our motivation, assume that  $\vec x:=(x_1,\dots,x_k)\in\Part n^k$ 
such that $k\in \Nplu$ is small, $\set{x_1,\dots,x_k}$ is a generating set of $\Part n$, and $\vec p=(p_1,\dots, p_b)$ is a vector of $k$-ary lattice terms.    Roughly saying, \cite[Proposition 5.1]{czgsesskey} implies that computing $\vec x$ from $\vec p$ and $\vec p(\vec x)$ is an NP-hard problem. Hence, hopefully, 
if $A$ and $B$ are two communicating parties who have previously agreed upon a secret key $\vec x$,  then they can change $\vec p$ on an open channel from time to time and use $\vec p(\vec x)$ as a \emph{session key} in a secret-key cryptosystem. 
Note at this point that  $\vec x$ in itself cannot be a (permanent) secret key; otherwise, the adversary could uncover $\vec x$ when he guesses the content of a, say, Vernam-cipher-encrypted message, and he could decrypt all further messages. 
A complete section in \cite{czgsesskey} warns the reader that no rigorous theoretical treatment approves this idea concerning modern cryptographic criteria. 
As these criteria are neither met by some popular cryptosystems like RSA, the idea given in \cite{czgsesskey} still has some motivating value for lattice theory and leads to the following conclusion: If we could construct \emph{very many} four-element generating sets of $\Part n$, then a random choice out of these constructible sets (augmented with a few further random partitions) \emph{might function} as a secret key. This gives some justification to our effort to find four-element generating sets of $\Part n$.

In addition to the paragraph above, there are also strictly lattice theoretical motivations. First, partition lattices play a central role in lattice theory, since they have nice properties and, say, congruence lattices are naturally embedded in partition lattices. Second, there are several earlier results on four-element generating sets of partition lattices, where the generating sets possess specific properties.
The first such property is that the set in question has two comparable members; in chronological order, Strietz \cite{strietz75}--\cite{strietz77}, Z\'adori \cite{zadori}, \cite{CzGEateq}, \cite{czgeek},  and \cite{czgoluoch} contain results on four-element generating sets with this property.  Some other properties are considered in 
\cite{czgpairhor},  \cite{czgirk}, and \cite{czgerl24}. Sometimes, different approaches to four-element generating sets can be combined, and this leads to many new four-element generating sets; the present paper exemplifies this by (the proof of) Corollary \ref{cor:nm}.

In $\Part A$, the \emph{height} of an atom is $1$; a partition $\alpha\in\Part A$ is an \emph{atom} if it has a two-element block and the rest of its blocks are singletons. If $\alpha\in\Part A$ is the join of two distinct atoms, then $\alpha$ is said to be of \emph{height $2$}. There are two sorts of partitions with height 2. Namely, $\alpha\in\Part A$ is of height 2 if and only if either $\alpha$ has two two-element blocks and the rest of its blocks are singletons or $\alpha$ has a three-element block and the rest of its blocks are singletons. 
According to these two possibilities, we say that $\alpha$ is of \emph{type} $2+2$ or it is of \emph{type} $3$, respectively. The paper deals with the following three properties of a four-element generating set $X$ of $\Part n$: $X$ contains an atom, $X$ contains a partition of height $2$ and type $2+2$, and $X$ contains a partition of height $2$ and type $3$.   However, it is reasonable to formulate the main result, Theorem \ref{thm:main}, more concisely.

Implicitly, the historical comment in \cite{czgerl24}, a recent paper,  may suggest that an atom in a four-element generating set of $\Part n$ for a large $n$ is probably impossible. Now it turns out that it is possible, and \cite{czgerl24} was the direct predecessor that inspired the present paper.

\section{Results}
For $n\leq 3$, $|\Part n|\leq 5$ and nothing interesting can be stated.
 Hence, the main result below assumes that $n\geq 4$. The only element of $\Part n$ with height $0$ is its smallest element $\mathbf 0_{\Part n}$.  
From Strietz's previously mentioned result asserting that $\Part n$ is not three-generated, it is not hard to see that the following theorem does not hold for 
$\alpha=\mathbf 0_{\Part n}$.

\stskip

\begin{theorem}\label{thm:main}
Assume that $4\leq n\in\Nplu$, and let $\alpha\in\Part n$ be a partition of height $1$ or $2$. Then there exist $\beta,\gamma,\delta\in\Part n$ such that $\set{\alpha,\beta,\gamma,\delta}$ is a four-element generating set of $\Part n$.
\end{theorem}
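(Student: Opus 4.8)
The plan is to prove a strong normalization statement first: up to the automorphisms of $\Part n$ (which are exactly the maps induced by permutations of $[n]$), every partition $\alpha$ of height $1$ has essentially one shape, and every partition of height $2$ has one of two shapes (type $2+2$ or type $3$). Since a set $X$ generates $\Part n$ if and only if any automorphic image $\sigma(X)$ does, it suffices to exhibit, for each of these three canonical shapes, a concrete choice of $\beta,\gamma,\delta$ completing $\alpha$ to a four-element generating set. I would therefore fix representatives, say the atom with block $\{1,2\}$, the type-$2+2$ partition with blocks $\{1,2\},\{3,4\}$, and the type-$3$ partition with block $\{1,2,3\}$, and treat these three cases.

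Next I would exploit the known machinery for four-generation of $\Part n$. The standard technique, going back to Strietz and refined in the cited papers, is to pick $\beta,\gamma,\delta$ so that the sublattice they generate with $\alpha$ contains enough atoms to force the whole lattice: concretely, one shows that all atoms $\{i,i+1\}$ (the "adjacent transposition" atoms) lie in the generated sublattice, and then invokes the classical fact that these atoms already generate $\Part n$. The engine for producing new atoms from old ones is the identity that, for suitable partitions, a meet of the form $(\text{atom}\vee \text{spread-out partition})\wedge(\dots)$ isolates a single new two-element block; iterating such meet-join combinations along a cleverly chosen "path" or "cycle" structure encoded by $\beta,\gamma,\delta$ walks one atom across all the adjacent pairs. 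I expect the cleanest route is to reuse a ready-made generating configuration from the literature (for instance one of the four-element generating sets with two comparable members) and then to argue that $\alpha$ can be inserted in place of one of its members, or that $\alpha$ together with a slightly perturbed version of that configuration still generates.

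The main obstacle is that $\alpha$ is prescribed and has very low height, so it carries almost no information on its own: a single small block. All the generating "work" must be done by $\beta,\gamma,\delta$, and the delicate point is to choose them so that (i) the four elements are genuinely distinct, and (ii) from $\alpha$'s one distinguished block one can bootstrap the full fan of adjacent-pair atoms without $\alpha$ and the others accidentally generating only a proper sublattice. The height-$2$ type-$3$ case looks the most awkward, because a three-element block behaves differently under meets with transpositions than a union of two disjoint transpositions does; I would handle it by first extracting from the type-$3$ block $\{1,2,3\}$ one of its sub-atoms (e.g.\ meeting $\alpha$ with a suitable $\beta$ to recover $\{1,2\}$) and thereby reducing to the atom case. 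The type-$2+2$ case should reduce similarly by meeting off one of the two blocks.

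Finally, once all adjacent-pair atoms are shown to lie in the generated sublattice, I would close the argument by citing the classical result that such atoms generate $\Part n$, concluding that $\set{\alpha,\beta,\gamma,\delta}$ generates. Throughout, I would verify four-elementness and distinctness by exhibiting, for each case, an explicit small labeled diagram or coordinate description of $\beta,\gamma,\delta$ on $[n]$, with the construction written so that it extends uniformly to all $n\geq 4$ (placing the "active" blocks on $\{1,2,3,4\}$ and letting the remaining elements be carried along by a single long block or a chain of transpositions). The reduction-by-meet idea for the two height-$2$ cases is what I would expect to make the whole proof short, since it lets a single atom-case construction do triple duty.
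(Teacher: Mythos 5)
Your opening normalization step (reducing via automorphisms to the three canonical shapes $\prt(12)$, $\prt(12;34)$, $\prt(123)$) and your closing step (produce all adjacent-pair atoms along a cycle and invoke the fact that they generate) are exactly the skeleton of the paper's proof. The gap is everything in between: you never exhibit the partitions $\beta,\gamma,\delta$, and the two concrete mechanisms you propose for obtaining them are both doubtful. First, \quotat{reusing a ready-made generating configuration from the literature and inserting $\alpha$ in place of one of its members} is precisely what the prior literature suggests cannot be done for an atom: the known four-element generating sets (Strietz, Z\'adori, and their successors) have members of large height, and the paper explicitly notes that earlier historical comments suggested an atom in a four-element generating set was probably impossible for large $n$. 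The actual proof must build new configurations from scratch: a \quotat{ladder} on two chains $a_1,\dots,a_k$ and $b_1,\dots,b_k$ with $\alpha=\prt(a_1 b_1)$ as the bottom rung, $\delta$ the two columns, and $\beta,\gamma$ the two diagonal matchings, followed by a genuine induction that walks the rung-atom $\prt(a_i b_i)$ up the ladder one level at a time. Second, your claim that the construction \quotat{extends uniformly to all $n\geq 4$} by parking the leftover points in a long block or a chain is not borne out: the paper needs four distinct ladder constructions depending on the parity of $n$ and on whether $\alpha$ is an atom or of type $3$, plus four sporadic lemmas for $n\in\set{4,5,6,7}$, where the ladders are too short for the inductive step to get started.

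Your reduction idea for the height-$2$ cases is only partially on target. For type $3$ the paper does recover the atom configuration on a subset, but not by meeting $\alpha$ alone with some $\beta$; it meets \emph{all four} generators with $\mu=\beta\vee\gamma$, whose blocks are $\set{c}$ and the rest, so that the entire quadruple restricts to the already-solved atom quadruple on $C\setminus\set{c}$, and only afterwards handles the extra point $c$. Making this work imposes a design constraint on $\beta$ and $\gamma$ (namely that $c$ be a singleton block of $\beta\vee\gamma$) that your sketch does not anticipate. For type $2+2$ the paper does not reduce at all; it simply quotes Z\'adori's theorem that $\prt(\set{1,2};\set{3,4})$ lies in a four-element generating set for $n\geq 5$. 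In short, your strategy is the right one, but the proof's actual content --- the explicit configurations, the parity split, the induction up the ladder, and the small-$n$ case analyses --- is missing.
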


\stskip

Let $\Pfin A$ denote the sublattice of $\Part A$ consisting of all partitions that have only finitely many non-singleton blocks and each of these blocks is finite.
For a subset $\Phi$ of $\Part A$, we denote by $[\Phi]$ the sublattice generated by $\Phi$. We will present a new concise proof of the following result.

\stskip

\begin{proposition}[\cite{CzGEateq}]\label{prop:aleph0} 
Let $A$ be a countably infinite set, and let $\alpha$ be an atom of $\Part A$. Then there are $\beta,\gamma,\delta\in\Part A$ such that $\Phi:=\set{\alpha,\beta,\gamma,\delta}$ generates $\Part A$ \emph{as a complete lattice} and, furthermore, $\Pfin A$ is included in the \emph{sublattice} generated by $\Phi$, that is, $\Pfin A\subseteq [\Phi]$.
\end{proposition}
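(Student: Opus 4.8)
The plan is to reduce the entire statement to the single assertion that \emph{every atom of $\Part A$ lies in the ordinary sublattice $[\Phi]$}. Write $\varepsilon_{x,y}$ for the atom whose only non-singleton block is $\set{x,y}$. If we manage to arrange $\beta,\gamma,\delta$ so that $\varepsilon_{x,y}\in[\Phi]$ for all two-element subsets $\set{x,y}\subseteq A$, then $\Pfin A\subseteq[\Phi]$ is immediate: a member $\pi$ of $\Pfin A$ has only finitely many non-singleton blocks and each of them is finite, so $\pi$ is the join of the finitely many atoms $\varepsilon_{x,y}$ with $\set{x,y}$ contained in a block of $\pi$, a finite join of elements of $[\Phi]$. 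Moreover, $\Part A$ is atomistic: every $\pi\in\Part A$ is the (possibly infinite) join of the atoms below it, since whenever $x,y$ share a block of $\pi$ we have $\varepsilon_{x,y}\leq\pi$, and these atoms join back to $\pi$. Hence, once all atoms lie in $[\Phi]$, they lie in the complete sublattice generated by $\Phi$, and that complete sublattice, being closed under arbitrary joins, must be all of $\Part A$. Thus $\Phi$ generates $\Part A$ as a complete lattice. This reduction is what makes the proof short, and it explains why taking $\alpha$ to be an atom is the natural hypothesis.

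It remains to produce $\beta,\gamma,\delta$ with all atoms in $[\Phi]$, and here I would reuse the construction behind Theorem~\ref{thm:main}. Identify $A$ with $\Nplu$ so that $\alpha=\varepsilon_{1,2}$, and choose $\beta,\gamma,\delta$ uniformly (self-similarly) along $\Nplu$ so that, for every $n\geq 4$, the initial segment $[n]$ is a union of blocks of each of $\alpha,\beta,\gamma,\delta$ and the four restrictions to $[n]$ form a four-element generating set of $\Part{[n]}$ of the kind delivered by Theorem~\ref{thm:main} (with the atom $\alpha|_{[n]}$ in the designated role). Demanding that each $[n]$ be block-closed for every generator is crucial: the restriction map $\pi\mapsto\pi|_{[n]}$ is then a lattice homomorphism on $[\Phi]$ (a path in a union of block-closed partitions cannot leave $[n]$), so evaluating in $\Part A$ any term $t$ that represents an atom $\varepsilon_{i,j}$ with $i,j\in[n]$ inside $\Part{[n]}$ yields a partition $t(\alpha,\beta,\gamma,\delta)\in[\Phi]$ whose restriction to $[n]$ is exactly $\varepsilon_{i,j}$.

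The gap, and what I expect to be the main obstacle, is that $t(\alpha,\beta,\gamma,\delta)$ may glue together elements of $\Nplu\setminus[n]$, so it equals $\varepsilon_{i,j}$ only up to some debris on the tail. To remove the debris I would intersect with the ``cut'' partition $\sigma_n$ whose blocks are the single block $[n]$ together with all singletons of $\Nplu\setminus[n]$; since $t(\alpha,\beta,\gamma,\delta)$ is block-closed on $[n]$, the meet $t(\alpha,\beta,\gamma,\delta)\wedge\sigma_n$ is precisely $\varepsilon_{i,j}$. Everything therefore hinges on showing $\sigma_n\in[\Phi]$ for each $n$, and this is exactly where the self-similar design of $\beta,\gamma,\delta$ must do real work: the construction has to be arranged so that the chain $\sigma_4\leq\sigma_5\leq\cdots$ of finite cuts is itself generated by $\Phi$ with finite terms, for instance by exhibiting each $\sigma_n$ as a short lattice polynomial that collapses $[n]$ while leaving the tail discrete. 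The zig-zag/successor shape of the generators, with $\alpha=\varepsilon_{1,2}$ serving as the symmetry-breaking seed that marks the left end, is what should make the two requirements compatible; verifying them is then inductive bookkeeping rather than a conceptual difficulty, and once the cuts are available the atoms, then $\Pfin A$, and finally all of $\Part A$ follow as in the first paragraph.
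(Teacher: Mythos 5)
Your opening reduction is exactly right, and it is also how the paper proceeds: once every atom $\varepsilon_{x,y}$ lies in $[\Phi]$, the inclusion $\Pfin A\subseteq[\Phi]$ follows by finite joins, and complete generation follows because every partition is the join of the atoms below it. The problem lies in the construction you propose for getting the atoms into $[\Phi]$.

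Your requirement that \emph{every} initial segment $[n]$ (for $n\geq 4$) be a union of blocks of each of $\alpha,\beta,\gamma,\delta$ is not merely hard to arrange; it is impossible for any generating quadruple. If both $[n]$ and $[n+1]$ are unions of blocks of a partition $\pi$, then $\set{n+1}$ is a block of $\pi$. Applying this for all $n\geq 4$ forces every element of $\Nplu\setminus[4]$ to be a singleton block of each of your four generators, so each generator is a partition of $[4]$ padded with singletons. But then every element of $[\Phi]$ has all of its non-singleton blocks inside $[4]$ (joins and meets of such partitions cannot create a non-singleton block outside $[4]$), so $[\Phi]$ contains neither $\varepsilon_{5,6}$ nor your cut partitions $\sigma_n$ for $n\geq 5$, and the restrictions to $[n]$ generate only a copy of a sublattice of $\Part 4$ rather than $\Part{[n]}$. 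The obstruction survives any weakening to a cofinal family of block-closed segments: if $A$ is partitioned into finite pieces each of which is block-closed under all four generators, then every member of $[\Phi]$ refines that partition into pieces, so no atom crossing two pieces can ever be generated. The paper avoids this entirely by using the infinite ladder of Lemma \ref{lemma:oddat}: $\delta$ has two \emph{infinite} blocks $\set{a_i:i\in\Nplu}$ and $\set{b_i:i\in\Nplu}$, and $\beta,\gamma$ zig-zag between the two rails, so no finite subset is block-closed. No cut partitions are needed there, because the term identities \eqref{align:odda}--\eqref{align:oddtket} and the induction step \eqref{eq:mWghpstx} are exact equalities already in $\Part A$: each meet occurring in those computations removes all of what you call debris. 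That locality of the finite computation, not block-closed truncation, is the mechanism that makes the infinite case work, and it is precisely what your plan is missing.
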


\stskip
In the proposition above, $[\Phi]$ and $\Part A$ are of cardinalities $\aleph_0$ and $2^{\aleph_0}$, whereby  $[\Phi]\neq\Part A$; furthermore,  $[\Phi]$ is not a complete sublattice of $\Part A$.

For $\alpha\in\Part A$ and $u\in A$, $u/\alpha$ stands for the $\alpha$-block of $u$, that is, for the unique block of $\alpha$ that contains $u$. 
For sets $A\subseteq B$ and partitions $\alpha\in\Part A$ and $\beta\in\Part B$, we say that $\beta$  \emph{extends}  $\alpha$ if for every $u\in A$, $u/\alpha=A\cap(u/\beta)$. 
Note that for $n,m\in\Nplu$,  $[n]\subseteq [n+m]$. 
We will derive the following statement from
a construction needed in the proof of 
 Theorem \ref{thm:main} and the proof of \cite[Theorem 1]{czgirk}.

\stskip

\begin{corollary}\label{cor:nm}
Assume that $m\in\Nplu$ is an even number,    $5\leq n\in\Nplu$ is an odd number, and $\alpha$ is an atom in $\Part n$. Then there are $\beta,\gamma,\delta\in\Part n$ such that the following two facts hold.
\begin{enumerate}
\item\label{cornma}$\set{\alpha,\beta,\gamma,\delta}$ is a four-element generating set of $\Part n$.
\item\label{cornmb}  $\Part{n+m}$ has at least 
$2^{m-3}\cdot(m-1)! / (3m+3)$  four-element generating sets $\set{\alpha',\beta',\gamma',\delta'}$ such that $\alpha'$, $\beta'$, $\gamma'$, and $\delta'$ extend $\alpha$, $\beta$, $\gamma$, and $\delta$, respectively.
\end{enumerate}  
\end{corollary}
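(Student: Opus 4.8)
The plan is to treat the two parts asymmetrically: part~\ref{cornma} is just an instance of Theorem~\ref{thm:main}, while part~\ref{cornmb} is obtained by feeding the \emph{particular} quadruple constructed for part~\ref{cornma} into the extension machinery of \cite[Theorem 1]{czgirk}. First I would dispose of part~\ref{cornma}. Since an atom has height~$1$, Theorem~\ref{thm:main} already guarantees that $\alpha$ lies in some four-element generating set $\set{\alpha,\beta,\gamma,\delta}$ of $\Part n$. For part~\ref{cornmb}, however, mere existence is not enough: I need the concrete $\beta,\gamma,\delta$ delivered by the \emph{construction} underlying the proof of Theorem~\ref{thm:main}, because that construction is tailored so that its partitions have a transparent block structure on $[n]$ which can be prolonged across fresh elements. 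So I would fix, once and for all, the $\beta,\gamma,\delta$ produced by that construction for the atom $\alpha$; this simultaneously proves part~\ref{cornma} and pins down the quadruple to be extended.

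Next I would build the extensions required in part~\ref{cornmb}. Writing $\set{n+1,\dots,n+m}$ for the $m$ new points, the idea is to thread these points into the existing structure following the recipe of \cite[Theorem 1]{czgirk}: one arranges the new points into a cyclic pattern and interleaves them with the blocks of $\alpha,\beta,\gamma,\delta$, obtaining partitions $\alpha',\beta',\gamma',\delta'\in\Part{n+m}$ that \emph{extend} $\alpha,\beta,\gamma,\delta$ in the sense defined before the corollary. The parity hypotheses --- $m$ even and $5\leq n$ odd, whence $n+m$ is odd --- are exactly what the \cite{czgirk} construction requires so that each prolonged quadruple still generates the whole lattice. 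I would therefore invoke \cite[Theorem 1]{czgirk} to conclude that every admissible configuration yields a four-element generating set $\set{\alpha',\beta',\gamma',\delta'}$ of $\Part{n+m}$, leaving only the enumeration of how many distinct such sets occur.

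It then remains to extract the numerical bound $2^{m-3}\cdot(m-1)!/(3m+3)$, and here the main obstacle lies. The factor $(m-1)!$ counts the cyclic orderings of the $m$ new points, and the factor $2^{m-3}$ counts the independent binary choices left free by the threading (the orientations and assignments at the remaining positions), so the raw number of admissible configurations is $2^{m-3}\cdot(m-1)!$. The difficulty is that distinct configurations may determine the same unordered quadruple $\set{\alpha',\beta',\gamma',\delta'}$; what I must verify carefully is that no unordered quadruple arises from more than $3m+3$ configurations, so that dividing the raw count by $3m+3$ gives an honest lower bound on the number of genuinely different generating sets. Thus, while generation itself is delegated to \cite[Theorem 1]{czgirk}, the delicate points I expect to grind through are (i) checking that the configurations I enumerate are all admissible for that theorem, (ii) confirming blockwise that $\alpha',\beta',\gamma',\delta'$ really extend $\alpha,\beta,\gamma,\delta$, and (iii) establishing the collision bound $3m+3$, which is the crux of the argument.
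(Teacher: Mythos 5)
Your overall route is the same as the paper's: take the concrete quadruple that the proof of Theorem~\ref{thm:main} produces for an atom and odd $n\geq 5$ --- namely the $\alpha=\prt(a_1 b_1)$, $\beta$, $\gamma$, $\delta$ of Lemma~\ref{lemma:oddat} with $n=2k+1$ --- which settles part~\eqref{cornma}, and then hand that quadruple to the extension machinery of \cite{czgirk} to get part~\eqref{cornmb}. That is exactly the paper's plan.

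The genuine gap is in the hand-off. The machinery of \cite{czgirk} does not accept an arbitrary four-element generating set of $\Part n$: it requires an \emph{eligible system} $(A;\beta,\gamma,\alpha,\delta;u,v)$, i.e., the quadruple together with a distinguished pair $(u,v)$ of elements of $A$ satisfying five lattice equations, which the paper verifies with $(u,v):=(a_k,a_{k+1})$: $\beta\vee\gamma=\topA$, $\beta\wedge\gamma=\botA$, $\alpha\vee\delta\vee\prt(u v)=\topA$, $\alpha\wedge\bigl(\delta\vee\prt(u v)\bigr)=\botA$, and $\delta\wedge\bigl(\alpha\vee\prt(u v)\bigr)=\botA$. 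Your proposal never identifies this hypothesis, never chooses the auxiliary pair $(u,v)$ inside $[n]$, and so never verifies the conditions under which \cite{czgirk} applies; your item (i) speaks of admissibility of the \emph{configurations of the $m$ new points}, which is a different (and already settled) matter. This verification is essentially the only mathematical content of the corollary's proof beyond quoting Lemma~\ref{lemma:oddat} and \cite{czgirk}, so omitting it leaves the argument unanchored. Conversely, your items (ii) and (iii) --- the blockwise extension check and the collision bound $3m+3$ --- are already proved in \cite{czgirk} for any eligible system; the paper merely replaces the $9$-element eligible system $\mathfrak A_0$ there by the $n$-element one (using that $n+m$ is odd), so you should not plan to re-derive the count $2^{m-3}\cdot(m-1)!/(3m+3)$ from scratch.
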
 

\stskip

\section{Proofs}
A partition with \emph{nonsingleton} blocks $\set{a_{1,1},\dots,a_{1,t_1}}$, \dots, 
$\set{a_{s,1},\dots,a_{s,t_s}}$ will be denoted by 
\[\prt(a_{1,1}\dots a_{1,t_1};\dots; a_{s,1}\dots a_{s,t_s}) \text{ or, if confusion threatens,  }
\prt(\set{a_{1,1},\dots, a_{1,t_1}};\dots; \set{a_{s,1},\dots, a_{s,t_s}}).
\]
For example, $\prt(23)$ and $\prt(13;24)$ are members of $\Part 7$; the former is an atom, the latter is of height $2$. However, we do not drop the commas and the curly brackets when 
dealing with $\Part n=\Part{[n]}$ for $n\geq 10$ or an unspecified $n$, since otherwise, say, $12$ (twelve) and the list $1,2$ could be confused. 
Note the \quotat{commutativity} of $\prt$; e.g.,  $\prt(x y)=\prt(y x)$.
In some form, the following trivial lemma occurs in many earlier papers; see, e.g., \cite[Lemma 2.5]{czgoluoch}.

\stskip

\begin{lemma}\label{lemma:circle} 
Assume that $3\leq k\in\Nplu$. Let $\set{a_1,\dots,a_k}$ be a $k$-element subset of a set $A$, and denote by $S$ the sublattice generated by $Y:=\set{\prt(a_i a_{i+1}): i\in[k-1]}\cup\set{\prt(a_k a_1)}$ in $\Part A$. Then for all $i,j\in[k]$ such that $i\neq j$, the partition $\prt(a_i a_j)$ belongs to $S$. Consequently, if $|A|=k$, then $Y$ is a generating set of $\Part A$. 
\end{lemma}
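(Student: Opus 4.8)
The plan is to prove first that every atom $\prt(a_i a_j)$ with $i\neq j$ lies in $S$, and then to deduce the generating claim from the fact that the atoms generate $\Part A$. Throughout I would pass to equivalence relations, so that for $\mu,\nu\in\Part A$ the join $\mu\vee\nu$ is the transitive closure of $\mu\cup\nu$ and the meet $\mu\wedge\nu$ is simply the intersection $\mu\cap\nu$ (as binary relations). The whole argument rests on what one might call a \emph{two-arc meet}.

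Reading the indices of $a_1,\dots,a_k$ cyclically (so $a_{k+1}=a_1$), the generators in $Y$ are exactly the edges of the cycle $a_1 - a_2 - \dots - a_k - a_1$. Fix $i\neq j$; I may assume that $a_i$ and $a_j$ are not cyclic neighbours, since otherwise $\prt(a_i a_j)\in Y\subseteq S$ already. Deleting the two vertices $a_i,a_j$ splits the cycle into two arcs, each of which is a path joining $a_i$ to $a_j$, and the two vertex sets of these arcs meet in exactly $\set{a_i,a_j}$ (their union being all of $\set{a_1,\dots,a_k}$). I would then record the elementary sub-fact that the join of the edge-atoms along a path $a_r - a_{r+1} - \dots - a_s$ is the partition whose only nonsingleton block is $\set{a_r,a_{r+1},\dots,a_s}$; this is immediate from the transitive-closure description of joins.

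Applying this sub-fact to the two arcs yields two partitions $\mu,\nu\in S$, each a finite join of generators, whose nonsingleton blocks are the two arc vertex sets. Since these blocks intersect precisely in $\set{a_i,a_j}$ and every other element is a singleton in at least one of $\mu,\nu$, the intersection $\mu\cap\nu$ relates $a_i$ to $a_j$ and relates no other pair of distinct elements. Hence $\mu\wedge\nu=\prt(a_i a_j)$, and so $\prt(a_i a_j)\in S$, proving the main claim. For the \quotat{consequently} part, suppose $|A|=k$, so that $A=\set{a_1,\dots,a_k}$ and the atoms of $\Part A$ are exactly the $\prt(a_i a_j)$ with $i\neq j$, all of which now lie in $S$. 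As every partition of $A$ is the join of the atoms below it, the atoms generate $\Part A$, whence $S=\Part A$ and $Y$ is a generating set.

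I expect no genuine obstacle, as the statement is elementary; the lemma is explicitly flagged as \quotat{trivial} in the excerpt. The only points needing a little care are the cyclic index bookkeeping when $a_i$ and $a_j$ straddle the wrap-around edge $\prt(a_k a_1)$, and the observation that the excluded neighbour case in fact fits the very same scheme, one arc simply degenerating to the single edge $\prt(a_i a_j)$, so that no separate argument is really required.
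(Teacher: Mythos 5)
Your proof is correct. The paper itself contains no proof of this lemma---it is explicitly flagged as trivial and attributed to earlier papers such as \cite[Lemma 2.5]{czgoluoch}---but your two-arc meet argument is precisely the standard one used there: the join of the edge-atoms along each of the two arcs of the cycle between $a_i$ and $a_j$ gives two partitions whose nonsingleton blocks meet exactly in $\set{a_i,a_j}$, their meet (intersection of equivalences) is $\prt(a_i a_j)$, and for $|A|=k$ the atoms generate $\Part A$ since every partition of a finite set is a finite join of atoms.
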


\stskip

The following four lemmas constitute the lion's share of the proofs of Theorem \ref{thm:main} and Corollary \ref{cor:nm}. Note that the corresponding constructions are visualized by Figure \ref{figa}, which adheres to the following convention. The non-singleton $\alpha$-blocks are denoted by ovals. 
Each of $\kappa\in\set{\beta,\gamma,\delta}$ has its own \emph{line style}, consisting of a color, a thickness,  and a feature (solid, dotted, dashed), which we use for the so-called \emph{$\kappa$-edges} of our graphs.   For $x,y\in A$, $x$ and $y$ belong to the same $\kappa$-block if and only if they can be connected by a path consisting of $\kappa$-edges of the graph. (The length of this path can be 0, allowing $x=y$.)
If $\kappa$ and $\kappa_0$ both occur in our arguments, then only a textual explanation specifies which $\kappa$-colored edges define $\kappa_0$; there 
are no separate  $\kappa_0$-styled edges.

\stskip

\begin{lemma}\label{lemma:oddat}
For $2\leq k\in\Nplu$ and $n=2k+1$, let $A=\{a_1,\dots, a_k, b_1,\dots,b_k, a_{k+1}=b_{k+1}\}$ be an $n$-element set. Let  $\alpha:=\prt(a_1 b_1)$,
\begin{align}
\beta_0&:=\prt(a_1 b_2;a_2 b_3;\dots;a_{k-1} b_{k})=\bigvee_{i\in[k-1]}\prt(a_i b_{i+1}), & \beta&:=\beta_0\vee \prt(a_k a_{k+1}), & 
\label{eq:loddata} 
\\
\gamma_0&:=\prt(b_1 a_2;b_2 a_3;\dots;b_{k-1} a_{k})=\bigvee_{i\in[k-1]}\prt(b_i a_{i+1}),  &\gamma&:=\gamma_0\vee \prt(b_k b_{k+1}), &
\label{eq:loddatb}
\end{align}
and $\delta :=\prt(a_1\dots a_k; b_1\dots b_k)$. Denote by $S_0$ and $S$ the sublattices generated by $\Phi_0:=\{\alpha,\beta_0, \gamma_0, \delta\}$ and $\Phi:=\set{\alpha,\beta,\gamma,\delta}$, respectively, in $\Part A$. Then 
\begin{enumerate}
\item[\textup{(a)}]\label{lemma:oddata}
for all $x,y\in A\setminus\set{a_{k+1}}=A\setminus\set{b_{k+1}}$ such that $x\neq y$, $\prt(x y)$ belongs to $S_0$, and
\item[\textup{(b)}]\label{lemma:oddatb}
$S=\Part A$, that is, $\Phi$ generates $\Part A$.
\end{enumerate}
\end{lemma}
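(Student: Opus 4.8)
The plan is to reduce both parts to Lemma~\ref{lemma:circle}. Call a partition of the form $\prt(xy)$ an \emph{edge}, and picture a family of edges as a graph. By Lemma~\ref{lemma:circle}, a family of edges whose graph is a cycle running through a subset $B\subseteq A$ forces every $\prt(xy)$ with $x,y\in B$ into the sublattice, and if the cycle runs through all of $B=A$ it forces the sublattice to be all of $\Part A$. So for (a) I aim to put every edge on $A\setminus\set{a_{k+1}}$ into $S_0$, and for (b) a Hamiltonian cycle of edges on all of $A$ into $S$. The whole game is manufacturing edges: the basic move is to join two partitions whose graphs meet in a single point, producing a three-point block, and then meet with $\delta$; since $\delta$ separates the $a$'s from the $b$'s, a block $\set{a_i,b,a_j}$ is cut down to the lone edge $\prt(a_i a_j)$.

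First I would record the seeds $P_1:=\prt(a_1a_2)=(\alpha\vee\gamma_0)\wedge\delta$ and $Q_1:=\prt(b_1b_2)=(\alpha\vee\beta_0)\wedge\delta$, together with the first two rungs $\prt(a_1b_2)=(\alpha\vee Q_1)\wedge\beta_0$ and $\prt(a_2b_1)=(\alpha\vee P_1)\wedge\gamma_0$. With $\alpha$ these give the $4$-cycle $a_1-b_2-b_1-a_2-a_1$, so Lemma~\ref{lemma:circle} already yields all edges on $\set{a_1,a_2,b_1,b_2}$. I would then induct on $j$: assuming all edges on $V_j:=\set{a_1,\dots,a_j,b_1,\dots,b_j}$ lie in $S_0$, I incorporate $a_{j+1},b_{j+1}$. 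Once I have the two rails $\prt(a_j a_{j+1})$, $\prt(b_j b_{j+1})$ and the two rungs $\prt(b_j a_{j+1})$, $\prt(a_j b_{j+1})$, the $4$-cycle $a_j-a_{j+1}-b_j-b_{j+1}-a_j$ gives all edges on $\set{a_j,a_{j+1},b_j,b_{j+1}}$ — crucially the diagonal $\prt(a_{j+1}b_{j+1})$, which a short parity count shows is indispensable — and then a Hamiltonian cycle on $V_{j+1}$ assembled from the old complete graph and these new edges closes the step through Lemma~\ref{lemma:circle}.

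The hard part, and the step I expect to be the main obstacle, is producing the \emph{first} edge that reaches a new point $a_{j+1}$. Since $a_{j+1}$ meets only $\beta_0$ and $\gamma_0$, every naive extraction is forced to meet $\gamma_0$ again at the last moment and returns a \emph{tangled} partition such as $\prt(b_{j-2}a_{j-1};b_j a_{j+1})$ instead of a single edge; and because the new point resists every old partition and is annihilated by $\delta$, the tangle cannot be split. My resolution is to anchor the bridge at $a_1$, which, unlike every other $a_i$, lies in no $\gamma_0$-edge: then $(\prt(a_1 b_j)\vee\gamma_0)\wedge\delta=\prt(a_1 a_{j+1})$ is a \emph{clean} bridge from the old point $a_1$ to the new point $a_{j+1}$, so that $(\prt(a_1 a_{j+1})\vee\prt(a_1 b_j))\wedge\gamma_0=\prt(b_j a_{j+1})$ is a clean rung and $(\prt(b_j a_{j+1})\vee\prt(a_j b_j))\wedge\delta=\prt(a_j a_{j+1})$ the rail; the $b$-side is symmetric through $b_1$. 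Here $\prt(a_1b_j)$ and $\prt(a_jb_j)$ are in $S_0$ by the induction hypothesis. Iterating up to $j=k$ deposits every edge on $A\setminus\set{a_{k+1}}$ in $S_0$, which is exactly (a).

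For (b) I would run the identical computations with $\beta,\gamma$ in place of $\beta_0,\gamma_0$: the extra blocks $\set{a_k,a_{k+1}}$ of $\beta$ and $\set{b_k,a_{k+1}}$ of $\gamma$ involve only $a_{k+1}$ and are discarded by every meet with $\delta$, so all the displayed identities survive verbatim and put every edge on $A\setminus\set{a_{k+1}}$ into $S$. The one thing to avoid is forming $\beta\vee\gamma$, which glues the two chains through $a_{k+1}$ and collapses to the top partition; the $a_1$- and $b_1$-anchored bridges sidestep this. It then remains to reach $a_{k+1}$ itself, and here I must avoid $\delta$ (which would delete $a_{k+1}$): using $\prt(a_k b_k)\in S$ I would extract the pendant edges $\prt(a_k a_{k+1})=\beta\wedge(\gamma\vee\prt(a_k b_k))$ and $\prt(b_k a_{k+1})=\gamma\wedge(\beta\vee\prt(a_k b_k))$. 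Finally, a Hamiltonian path of edges through the now-complete set $A\setminus\set{a_{k+1}}$ from $a_k$ to $b_k$ closes through $a_{k+1}$ into a Hamiltonian cycle on all of $A$, and Lemma~\ref{lemma:circle} gives $S=\Part A$.
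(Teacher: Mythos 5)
Your proof is correct, but the engine driving the induction is genuinely different from the paper's. The paper introduces the auxiliary element $\epsilon=(\beta_0\vee\gamma_0)\wedge\delta$, whose blocks are the \quotat{distance-two} vertical pairs, and climbs the ladder purely locally: from $\prt(a_i b_i)$ it manufactures $\prt(a_{i-1} a_{i+1})$ and $\prt(b_{i-1} b_{i+1})$ by meeting with $\epsilon$, then peels off the rails, the rungs, and the next diagonal $\prt(a_{i+1} b_{i+1})$ one at a time, invoking Lemma~\ref{lemma:circle} only once, at the very end, on the perimeter. You dispense with $\epsilon$ entirely and instead exploit the global observation that $a_1$ is a singleton of $\gamma_0$ and $b_1$ a singleton of $\beta_0$, so the long-range bridges $\bigl(\prt(a_1 b_j)\vee\gamma_0\bigr)\wedge\delta=\prt(a_1 a_{j+1})$ and $\bigl(\prt(b_1 a_j)\vee\beta_0\bigr)\wedge\delta=\prt(b_1 b_{j+1})$ reach each new level cleanly; the price is a stronger induction hypothesis (the complete graph on $V_j$) and repeated appeals to Lemma~\ref{lemma:circle} inside the induction. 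I checked your bridge, rung, and rail identities, the $k=2$ base case, the survival of all identities when $\beta_0,\gamma_0$ are replaced by $\beta,\gamma$ (the extra blocks $\set{a_k,a_{k+1}}$ and $\set{b_k,a_{k+1}}$ are indeed killed by the relevant meets), and the two pendant edges at $a_{k+1}$, which coincide with the paper's \eqref{eq:VjrW6gh}; all are sound. One economy you could have borrowed from the paper: $\beta_0=\beta\wedge(\alpha\vee\delta)$ and $\gamma_0=\gamma\wedge(\alpha\vee\delta)$ give $S_0\subseteq S$ at once, so part (b) follows from part (a) plus the two pendant edges, with no need to rerun the computations using $\beta$ and $\gamma$.
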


\begin{figure}[ht] 
\centerline{ \includegraphics[width=\figwidthcoeff\textwidth]{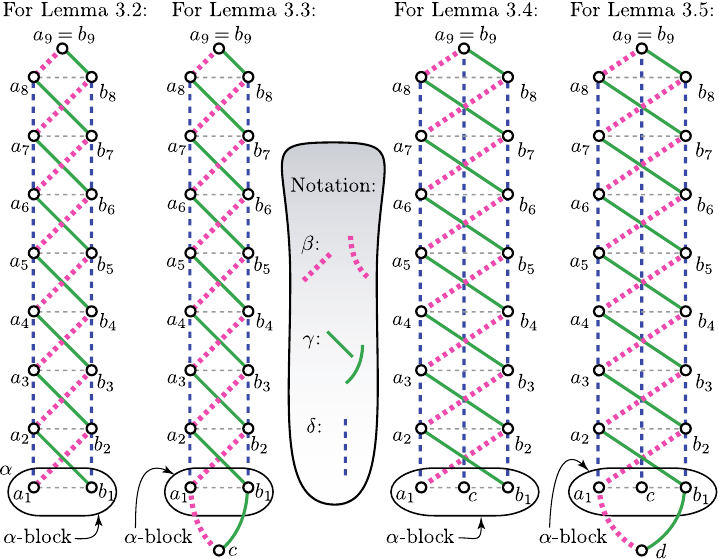}} \caption{The constructions for Lemmas \ref{lemma:oddat}--\ref{lemma:oddhtwo}, with $k:=8$}\label{figa}
\end{figure}

\stskip

\begin{proof} For $k=8$, our partitions are visualized by the graph on the left of Figure \ref{figa}.
In addition to the edges defining $\beta$, $\gamma$, and $\delta$, the graph also contains the
horizontal grey dashed edges $(a_i,b_i)$ for $i\in [k]$; the same applies for the second graph in the figure.  For the third and fourth graphs, the  $(a_i,b_i)$s are edges only for $2\leq i\leq k$, but the grey-dashed
$(a_1,c)$ and $(c,b_1)$ are  edges, too. 

Observe that $\beta_0=\beta\wedge (\alpha\vee\delta)\in S$ and $ \gamma_0=\gamma\wedge (\alpha\vee\delta)\in S$. Hence, $S_0\subseteq S$. 
It suffices to show that 
\begin{align}
&\text{for every edge }(x,y) \text{ of the graph such that  } x\neq a_{k+1}\neq y, \text{ }\, \prt(x y)\in S_0\text{, and}
\label{eq:mndL9gra}
\\
&\text{for every edge }(x,y)\text{ of the graph, we have that }\prt(x y)\in S.
\label{eq:mndL9grb}
\end{align}
Indeed, if \eqref{eq:mndL9gra} and \eqref{eq:mndL9grb} hold, then Lemma \ref{lemma:circle} applied to the  the \quotat{perimeter} of the subgraph $A\setminus\set{a_{k+1}=b_{k+1}}$  and to the 
\quotat{perimeter} of the whole graph yields Parts (a) and (b) of Lemma \ref{lemma:oddat}, respectively. Let us compute; note that $(a_i,b_i,\beta,\gamma)$ and  $(b_i,a_i,\gamma,\beta)$  play the same role by symmetry, so a part of our computations need no separate checking.
\allowdisplaybreaks{
\begin{align}
\prt(a_1 b_1)={}&\alpha\in S_0,  \label{align:odda}\\ 
%Technical Editor: the {} is needed for the right spacing and alignment
\epsilon:={}&\bigvee_{i=1}^{k-2} \prt(a_i a_{i+2}; b_i b_{i+2})=(\beta_0\vee \gamma_0)\wedge \delta \in S_0.  \label{align:oddb}
\end{align}
}%  \allowdisplaybreaks
% Technical Editor: the ``%`` sign cannot be removed after the closing } of the \allowdisplaybreaks !
% (Otherwise the next line begins with a superfluous space.)
We can assume that the graph is drawn so that the geometric distance of  $a_i$ and $a_{i+1}$ and that of  $b_i$ and $b_{i+1}$  are 1 for $i\in[k-1]$. Then
for $x\neq y\in A$, $x$ and $y$ belong to the same $\epsilon$-block if and only if they lie on the same vertical geometric line and their distance is an even integer. This visual idea helps to understand the rest of the computations in some places.  Along the graph, we  proceed upwards; 
each containment \quotat{$\in S_0$}  below follows from the preceding containments within the list \eqref{align:odda}--\eqref{align:oddtket}, 
$\Phi_0\subseteq S_0$,  and the commutativity of our notation.
\allowdisplaybreaks{
\begin{gather}
\prt(a_1 a_2)=(\alpha\vee \gamma_0)\wedge\delta \in S_0,  
\quad
\prt(b_1 b_2)=(\alpha\vee \beta_0)\wedge\delta \in S_0,
\label{align:oddot}
\\
\prt(a_1 b_2)=\bigl(\prt(a_1 b_1)\vee \prt(b_1 b_2)\bigr)\wedge\beta_0\in S_0,
\quad
\prt(b_1 a_2)=\bigl(\prt(b_1 a_1)\vee \prt(a_1 a_2)\bigr)\wedge\gamma_0\in S_0,
\label{align:oddhat}
\\
\prt(a_2 b_2)=\bigl(\prt(a_2 a_1)\vee\prt(a_1 b_2\bigr) \wedge \bigl(\prt(a_2 b_1)\vee\prt(b_1 b_2\bigr)\in S_0,
\label{align:oddhet}
\\
\prt(a_2 a_3; b_1 b_2)=\bigl(\prt(a_2 b_2)\vee \gamma_0\bigr)\wedge\delta\in S_0,
\quad
\prt(b_2 b_3; a_1 a_2)=\bigl(\prt(b_2 a_2)\vee \beta_0\bigr)\wedge\delta\in S_0,
\label{align:oddnyolc}
\\
\left.\begin{aligned}
\prt(a_1 a_3)=\bigl(\prt(a_2 a_3; b_1 b_2)\vee \prt(a_1 a_2)\bigr)\wedge\epsilon\in S_0,
\cr
\prt(b_1 b_3)=\bigl(\prt(b_2 b_3; a_1 a_2)\vee \prt(b_1 b_2)\bigr)\wedge\epsilon\in S_0,
\end{aligned} \,\,\right\}
\label{align:oddkil}\\
\left.\begin{aligned}
\prt(a_2 a_3)=\bigl(\prt(a_2 a_1)\vee\prt(a_1 a_3)) \wedge \prt(a_2 a_3; b_1 b_2)\in S_0,  \cr
\prt(b_2 b_3)=\bigl(\prt(b_2 b_1)\vee\prt(b_1 b_3)) \wedge \prt(b_2 b_3; a_1 a_2)\in S_0,
\end{aligned} \,\,\right\}
\label{align:oddtiz}
\\
\left.\begin{aligned}
\prt(a_3 b_2)=\bigl(\prt(a_3 a_2)\vee \prt(a_2 b_2)\bigr)\wedge \gamma_0\in S_0,\cr
\prt(b_3 a_2)=\bigl(\prt(b_3 b_2)\vee \prt(b_2 a_2)\bigr)\wedge \beta_0\in S_0,
\end{aligned} \,\,\right\}
\label{align:oddtegy}
\\
\prt(a_3 b_3)=\bigl(\prt(a_3 b_2)\vee\prt(b_2 b_3)\bigr) \wedge \bigl(\prt(a_3 a_2)\vee\prt(a_2 b_3)\bigr) \in S_0.
\label{align:oddtket}
\end{gather}
}%\allowdisplaybreaks
We have seen so far that  $\prt(a_3 b_3)\in S_0$ and for every edge $(x,y)$ of the graph that is (geometrically) below $(a_3,b_3)$, the partition $\prt(x y)$ is in $S_0$. 
(Of course, we stop here if $k=3$, and  we stop right after \eqref{align:oddhet} if $k=2$.) As an induction hypothesis, assume  that $3\leq i\in [k-1]$ and
\begin{equation}
\prt(a_i b_i)\in S_0\text{ and  }\prt(x y)\in S_0\text{ for every edge }(x,y)\text{  below }(a_i,b_i).
\label{eq:mWghpstx}
\end{equation} 
Repeating \eqref{align:oddnyolc}--\eqref{align:oddtket} so that we change the subscripts $1$, $2$, and $3$ to $i-1$, $i$, $i+1$, respectively, we obtain the validity of \eqref{eq:mWghpstx} for $i+1$.  Therefore, it follows by induction that 
\eqref{eq:mWghpstx} holds for $i=k$. Hence, we have proved \eqref{eq:mndL9gra}.
Finally, adding  
\begin{equation}
\prt(a_k a_{k+1})=\bigl(\prt(a_k b_k)\vee \gamma \bigr)\wedge \beta\in S\,\text{ and }\,
\prt(b_k b_{k+1})=\bigl(\prt(a_k b_k)\vee \beta \bigr)\wedge \gamma\in S 
\label{eq:VjrW6gh}
\end{equation}
to \eqref{eq:mndL9gra}, we obtain that \eqref{eq:mndL9grb} also holds,
completing the proof of Lemma \ref{lemma:oddat}.
\end{proof}

\stskip

\begin{lemma}\label{lemma:evenat}
For $3\leq k\in\Nplu$ and $n=2k+2$, let $B=\{a_1,\dots, a_k, b_1,\dots,b_k, a_{k+1}=b_{k+1}, c\}$ be an $n$-element set. Let  $\alpha:=\prt(a_1 b_1)$,
\begin{align*}
\beta&:=\prt(c a_1 b_2; a_2 b_3;\dots;a_{k} b_{k+1})=\prt(a_1 c)\vee  \bigvee_{i\in[k]}\prt(a_i b_{i+1}),  \cr
\gamma&:=\prt(c b_1 a_2; b_2 a_3;\dots;b_{k} a_{k+1})=\prt(b_1 c)\vee  \bigvee_{i\in[k]}\prt(b_i a_{i+1}), 
\end{align*}
and $\delta :=\prt(a_1\dots a_k; b_1\dots b_k)$. Then $\Phi:=\set{\alpha,\beta,\gamma,\delta}$ generates $\Part B$.
\end{lemma}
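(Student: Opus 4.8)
The plan is to reduce Lemma~\ref{lemma:evenat} to Lemma~\ref{lemma:oddat} by exhibiting the sublattice $S_0$ of the previous lemma inside the sublattice $S$ generated here, and then handling the two extra vertices $a_{k+1}=b_{k+1}$ and $c$ separately. First I would observe that the set $B$ has $2k+2$ elements and that, if we temporarily ignore $c$, the remaining $2k+1$ elements $\set{a_1,\dots,a_k,b_1,\dots,b_k,a_{k+1}=b_{k+1}}$ are exactly the carrier of Lemma~\ref{lemma:oddat}. The partitions $\beta$, $\gamma$, $\delta$ here restrict (away from $c$) to the corresponding $\beta$, $\gamma$, $\delta$ there, since deleting the blocks containing $c$ from the present $\beta,\gamma$ recovers $\beta_0\vee\prt(a_kb_{k+1})$ and $\gamma_0\vee\prt(b_ka_{k+1})$, whose joinands match \eqref{eq:loddata}--\eqref{eq:loddatb}. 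So the main structural idea is that, modulo $c$, this is the odd case with $n=2k+1$, whose generation was already settled in Lemma~\ref{lemma:oddat}(b).

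Concretely, following the convention established before Lemma~\ref{lemma:oddat}, I would verify that for every edge $(x,y)$ of the graph (the third/fourth picture in Figure~\ref{figa}) with $x,y\in B\setminus\set c$, the atom $\prt(x y)$ lies in $S$. The computations of \eqref{align:odda}--\eqref{eq:VjrW6gh} can be reused essentially verbatim, because the joins and meets used there only ever touch the $a_i$'s and $b_i$'s, and the present $\beta,\gamma$ differ from the earlier ones only in that they additionally lump $c$ into the block of $a_1$ and $b_1$, respectively; intersecting with $\delta$ or with suitable $\prt(x y)$ that avoid $c$ strips $c$ back out. Thus I expect to recover $\prt(x y)\in S$ for all atoms on the $2k+1$ non-$c$ vertices, and in particular $\prt(a_k a_{k+1})$ and $\prt(b_k b_{k+1})$ as in \eqref{eq:VjrW6gh}. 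By Lemma~\ref{lemma:circle} applied to the perimeter cycle on these $2k+1$ vertices, every $\prt(x y)$ with $x,y\in B\setminus\set c$ is then in $S$.

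It remains to pull $c$ into the picture, which I expect to be the one genuinely new step. Having all atoms $\prt(x y)$ on $B\setminus\set c$ available, I would produce $\prt(a_1 c)$ and $\prt(b_1 c)$ as meets of $\beta$ (resp.\ $\gamma$) with an appropriate join of already-constructed atoms. The natural candidate is $\prt(a_1 c)=\beta\wedge\bigl(\prt(a_1 b_2)\vee\cdots\bigr)$ arranged so that the only surviving nontrivial block is $\set{a_1,c}$: since $\beta$ glues $c$ exactly to $a_1$ and $b_2$, meeting $\beta$ with a partition that already contains $\prt(a_1 b_2)\vee(\text{the rest of }\beta\text{ on }B\setminus\set c)$ but separates $c$ from everything isolates $\prt(a_1 c)$. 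Symmetrically one gets $\prt(b_1 c)$. Once $\prt(a_1 c)$ (or $\prt(b_1 c)$) is in $S$, the vertex $c$ is joined by an edge to the perimeter cycle, so Lemma~\ref{lemma:circle} applied to the full $n$-vertex graph gives $\prt(x y)\in S$ for all $x\neq y\in B$, whence $S=\Part B$.

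The main obstacle is the last step: unlike the $a_i,b_i$ vertices, $c$ sits on only the two edges $(a_1,c)$ and $(b_1,c)$, so there is less room to maneuver, and one must choose the meetand carefully so that the intersection with $\beta$ collapses every block except $\set{a_1,c}$ rather than accidentally retaining $\set{a_1,b_2}$ or reabsorbing $c$. I would expect to need the full strength of having \emph{all} atoms on $B\setminus\set c$ in hand (not merely the perimeter edges) in order to build a partition whose meet with $\beta$ is precisely $\prt(a_1 c)$; verifying that this meet is correct is the delicate routine check, but it is structurally forced once the odd case is transported over.
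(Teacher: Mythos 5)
Your overall strategy coincides with the paper's: recover the configuration of Lemma \ref{lemma:oddat} inside $B\setminus\set{c}$ (the paper does this by explicitly forming $\beta_0=\beta\wedge(\alpha\vee\delta)$ and $\gamma_0=\gamma\wedge(\alpha\vee\delta)$, which strips $c$ and $a_{k+1}$ out at once), reuse \eqref{align:odda}--\eqref{eq:mWghpstx} and \eqref{eq:VjrW6gh}, and only then attach $c$. Up to that point your outline is sound.

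However, the one step you yourself single out as genuinely new is described incorrectly, and as written it would fail. You propose to obtain $\prt(a_1 c)$ as $\beta\wedge\theta$ where $\theta$ contains $\prt(a_1 b_2)$ together with ``the rest of $\beta$ on $B\setminus\set c$'' and \emph{separates $c$ from everything}. A meet of equivalence relations is their intersection, so if $\theta$ separates $c$ from every other element, then so does $\beta\wedge\theta$; the block $\set{a_1,c}$ can never survive such a meet. Moreover, since your $\theta$ keeps $a_1$ and $b_2$ together, the meet retains the pair $(a_1,b_2)$: what you would actually get is $\prt(a_1 b_2;a_2 b_3;\dots;a_k b_{k+1})$, i.e.\ $\beta$ with $c$ deleted, not $\prt(a_1 c)$. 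The requirements are exactly the opposite: the meetand must \emph{join} $a_1$ to $c$ while \emph{splitting} every nonsingleton block of $\beta$, in particular separating $a_1$ from $b_2$. The paper achieves this with no extra atoms at all, using only the given generators: since $\gamma$ links $c$ to $b_1$ and $\alpha=\prt(a_1 b_1)$ links $b_1$ to $a_1$, the partition $\prt(a_1 b_1)\vee\gamma$ has the block $\set{a_1,b_1,c,a_2}$ and otherwise consists of the blocks $\set{b_i,a_{i+1}}$, which cut every block of $\beta$ down to the single surviving pair $\set{a_1,c}$; hence $\prt(a_1 c)=\beta\wedge\bigl(\prt(a_1 b_1)\vee\gamma\bigr)$ and symmetrically $\prt(c b_1)=\gamma\wedge\bigl(\beta\vee\prt(a_1 b_1)\bigr)$. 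With these two identities in place of your recipe, the rest of your argument (Lemma \ref{lemma:circle} on the full perimeter) goes through.
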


\stskip

\begin{proof} 
For $k=8$, the situation is visualized by the second graph in Figure \ref{figa}.
Let $S$ be the sublattice generated by $\Phi$ in $\Part B$. 
As $A$ from Lemma \ref{lemma:oddat} is  subset of $B$, 
the definition of  $\beta_0$ and $\gamma_0$ in \eqref{eq:loddata}--\eqref{eq:loddatb}
makes sense.  We still have that $\beta_0=\beta\wedge(\alpha\vee\delta)\in S$ and 
$\gamma_0=\gamma\wedge(\alpha\vee\delta)\in S$. So the sublattice $S_0$ generated by $\Phi_0:=\set{\alpha,\beta_0,\gamma_0,\delta}$ is included in $S$. 
The argument \eqref{align:odda}--\eqref{eq:mWghpstx} needs no change to yield the validity of \eqref{eq:mndL9gra} for $B\setminus\set c$.
That is, if $(x,y)$ is an edge of the graph and $\set{x,y}\cap\set{a_{k+1}=b_{k+1},c}=\emptyset$, then $\prt(x y)\in S_0\subseteq S$. 
As $c$ and $a_{k+1}=b_{k+1}$ are geometrically far enough from each other, the equalities in \eqref{eq:VjrW6gh}  remain valid. (Note that they would fail for  $k=2$.) Hence,  we obtain that $\prt(a_k a_{k+1}), \prt(b_k b_{k+1})\in S$.  Applying Lemma \ref{lemma:circle} to these two memberships,
$\prt(a_1 c)=\beta\wedge\bigl(\prt(a_1 b_1)\vee \gamma \bigr)\in S$, 
$\prt(c b_1)=\gamma\wedge\bigl(\beta\vee \prt(a_1 b_1)\bigr)\in S$, and  \eqref{eq:mndL9gra},  we conclude the validity of Lemma \ref{lemma:evenat}.
\end{proof}
 
\stskip

\begin{lemma}\label{lemma:evenhtwo}
For $2\leq k\in\Nplu$ and $n=2k+2$, let $C=\{a_1,\dots, a_k, b_1,\dots,b_k, a_{k+1}=b_{k+1}, c\}$ be an $n$-element set. Let  $\alpha:=\prt(a_1 b_1 c)$,
\begin{align*}
\beta&:=\prt(a_1 b_2;a_2 b_3;\dots;a_{k} b_{k+1})=\bigvee_{i\in[k]}\prt(a_i b_{i+1}),  \cr
\gamma&:=\prt(b_1 a_2;b_2 a_3;\dots;b_{k} a_{k+1})= \bigvee_{i\in[k]}\prt(b_i a_{i+1}), 
\end{align*}
and $\delta :=\prt(a_1\dots a_k; b_1\dots b_k; ca_{k+1})$. Then $\Phi:=\set{\alpha,\beta,\gamma,\delta}$ generates $\Part C$.
\end{lemma}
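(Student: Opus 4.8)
The plan is to mimic the architecture of the proofs of Lemmas~\ref{lemma:oddat} and~\ref{lemma:evenat}, reducing everything to Lemma~\ref{lemma:circle} applied to the perimeter of the graph $C$. Let $S$ denote the sublattice generated by $\Phi$ in $\Part C$. The target is to show that $\prt(x\,y)\in S$ for every edge $(x,y)$ of the perimeter cycle $a_1,\dots,a_k,a_{k+1}=b_{k+1},b_k,\dots,b_1,c,a_1$; once these generating transpositions are in $S$, Lemma~\ref{lemma:circle} forces $S=\Part C$. The new feature here is that $\alpha=\prt(a_1 b_1 c)$ is of height~$2$ and type~$3$ (a three-element block) rather than an atom, and that the singleton-splitting vertex $c$ is now glued to $a_{k+1}$ inside $\delta$ instead of living in its own block. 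So the first step is to recover the auxiliary partitions $\beta_0,\gamma_0$ used before. I expect $\beta_0=\beta\wedge(\alpha\vee\delta)$ and $\gamma_0=\gamma\wedge(\alpha\vee\delta)$ to land back in $S$, possibly after a small correction forced by the altered $\alpha$ and $\delta$, which would let me reinstate the inductive ladder \eqref{align:odda}--\eqref{eq:mWghpstx} verbatim and conclude $\prt(x\,y)\in S$ for all edges $(x,y)$ internal to $A\setminus\set{a_{k+1}}$.

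The next step is to climb the two top rungs, namely $\prt(a_k a_{k+1})$ and $\prt(b_k b_{k+1})$. Here I would adapt \eqref{eq:VjrW6gh}, but I must be careful: in the present lemma $\beta$ already contains the edge $(a_k,b_{k+1})$ and $\gamma$ the edge $(b_k,a_{k+1})$, whereas in Lemma~\ref{lemma:oddat} these were the separately adjoined terms $\prt(a_k a_{k+1})$ and $\prt(b_k b_{k+1})$. So the meet-of-joins trick must be rechecked against the new $\beta,\gamma$; I anticipate a formula of the shape $\prt(a_k a_{k+1})=\bigl(\prt(a_k b_k)\vee\gamma\bigr)\wedge\beta$ still working, but verifying that no unwanted block collapses (for instance via the $c$--$a_{k+1}$ link now present in $\delta$) is exactly the delicate point.

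The genuinely new work concerns the vertex $c$, and this is where I expect the main obstacle. Unlike the previous lemmas, $c$ is joined to $a_1,b_1$ through the height-$2$ block of $\alpha$ and to $a_{k+1}$ through $\delta$, so $c$ is reachable from two geometrically distant regions. The plan is to extract the two perimeter edges $\prt(c\,a_1)$ and $\prt(c\,b_1)$ by intersecting $\alpha$ (whose only nonsingleton block is $\set{a_1,b_1,c}$) with suitable joins that separate $a_1,b_1,c$ from each other; concretely I would look for expressions like $\prt(a_1 c)=\alpha\wedge\bigl(\text{a partition isolating }a_1,c\text{ from }b_1\bigr)$, using $\beta,\gamma$ and the already-constructed transpositions to build a partition in which $b_1$ sits apart from the $a_1$--$c$ thread. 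Dually for $\prt(b_1 c)$. The risk is that the $c$--$a_{k+1}$ adjacency inside $\delta$ creates a long path threading the whole graph, so I must ensure the partitions I meet with do \emph{not} already identify $c$ with $a_{k+1}$ in a way that re-links $c$ back to $a_1,b_1$ by a different route; checking that the relevant meets genuinely cut $\alpha$'s triangle into the desired transpositions is the crux. Once $\prt(c\,a_1)$ (or $\prt(c\,b_1)$) is secured and $\prt(a_1 b_1)$ is recovered as $\alpha\wedge\bigl(\prt(c\,a_1)\vee\text{something}\bigr)$ from the triangle, every perimeter edge of $C$ lies in $S$, and Lemma~\ref{lemma:circle} applied to the full perimeter yields $S=\Part C$.
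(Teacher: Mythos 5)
Your plan hinges on recovering $\beta_0$ and $\gamma_0$ via $\beta_0=\beta\wedge(\alpha\vee\delta)$, and this is exactly where it breaks: in the present lemma the three-element block $\set{a_1,b_1,c}$ of $\alpha$ meets all three blocks $\set{a_1,\dots,a_k}$, $\set{b_1,\dots,b_k}$, $\set{c,a_{k+1}}$ of $\delta$, so $\alpha\vee\delta=\mathbf 1_{\Part C}$ and the proposed meet returns $\beta$ itself, not $\beta_0$. The ``small correction'' you hope for is not small, because without genuine $\beta_0,\gamma_0$ the ladder cannot even start: the analogue of \eqref{align:oddb} gives $(\beta\vee\gamma)\wedge\delta=\prt(a_1\dots a_k;b_1\dots b_k)$ rather than $\epsilon$. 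The missing idea is to use $\mu:=\beta\vee\gamma$, whose blocks are $A$ and the singleton $\set c$: meeting each generator with $\mu$ yields exactly the images $f(\alpha_A),\dots,f(\delta_A)$ of the Lemma \ref{lemma:oddat} generators under the natural embedding $f\colon\Part A\to\Part C$ (note that $\beta$ restricted to $A$ already equals $\beta_A=\beta_0\vee\prt(a_k a_{k+1})$ since $b_{k+1}=a_{k+1}$). Then Lemma \ref{lemma:oddat}(b) gives $f(\Part A)\subseteq S$ wholesale, so there is no need to redo the inductive ladder or the top rungs at all.

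Your treatment of $c$ is likewise only a sketch, and you treat the $c$--$a_{k+1}$ adjacency inside $\delta$ as a danger when it is in fact the tool: once $f(\Part A)\subseteq S$ one has $\prt(a_1 b_{k+1})\in S$, and $\prt(a_1 c)=\alpha\wedge\bigl(\prt(a_1 b_{k+1})\vee\delta\bigr)$, $\prt(c b_1)=\alpha\wedge\bigl(\delta\vee\prt(b_{k+1}b_1)\bigr)$; the join $\prt(a_1 b_{k+1})\vee\delta$ puts $a_1$ and $c$ (but not $b_1$) in one block, so the meet with $\alpha$ cuts the triangle as desired. With these two edges and the perimeter transpositions already in $S$, Lemma \ref{lemma:circle} finishes the proof. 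So the overall architecture (reduce to the perimeter and apply Lemma \ref{lemma:circle}) is right, but the two steps you flag as delicate are precisely the ones where your concrete formulas either fail or are absent.
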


\stskip

\begin{proof} For $k=8$, the situation is visualized by the third graph in Figure \ref{figa}. Let $S$ be the sublattice generated by $\set{\alpha,\beta,\gamma,\delta}$ in $\Part C$.
Since $A$   from Lemma \ref{lemma:oddat} is subset of $C$,  there is a natural embedding $f\colon \Part A \to \Part C$; for $\kappa\in\Part A$,  we obtain $f(\kappa)$ from $\kappa$ by adding a singleton block $\set x$ to it for all $x\in C\setminus A$.  (Now there is only one such $x$, namely, $x=c$.)
To distinguish the partitions of $A$ from the members of $\Part C$,  we add $A$ as a subscript to each of the partitions defined in Lemma \ref{lemma:oddat}, and we let $\Phi_A:=\set{\alpha_A,\beta_A,\gamma_A,\delta_A}$. The partition  
$\mu:=\beta\vee \gamma\in \Part C$ has only two blocks, the singleton $\set c$ and $A$. Hence, for every $\kappa\in \Phi$, $f(\kappa_A)=\kappa\wedge \mu\in S$, that is, 
$f(\alpha_A)=\alpha\wedge \mu\in S$, \dots, $f(\delta_A)=\delta\wedge \mu\in S$. 
Since $f$ is an embedding and, by Lemma \ref{lemma:oddat}, $\Phi_A$ generates
$\Part A$, we obtain that $f(\Part A)\subseteq S$. 
In particular, if $x,y\in A$, $x\neq y$, and $c\notin\set{x,y}$, then $\prt(x y)\in S$. Hence, $\prt(a_1 c)=\alpha\wedge\bigl(\prt(a_1 b_{k+1}) \vee\delta\bigr)\in S$
and $\prt(c b_1)=\alpha\wedge\bigl(\delta\vee \prt(b_{k+1}  b_1)\bigr)\in S$. Finally, applying Lemma \ref{lemma:circle} to the \quotat{perimeter} of the graph, we conclude Lemma \ref{lemma:evenhtwo}.
\end{proof}

\stskip

\begin{lemma}\label{lemma:oddhtwo}
For $3\leq k\in\Nplu$ and $n=2k+3$, let $D=\{a_1,\dots, a_k, b_1,\dots,b_k, a_{k+1}=b_{k+1}, c, d\}$ be an $n$-element set. Let  $\alpha:=\prt(a_1 b_1 c)$,
\begin{align*}
\beta&:=\prt(d a_1 b_2; a_2 b_3; \dots; a_{k} b_{k+1})=\prt(d a_1)\vee  \bigvee_{i\in[k]}\prt(a_i b_{i+1}),  \cr
\gamma&:=\prt(d b_1 a_2; b_2 a_3;\dots;b_{k} a_{k+1})= \prt(d b_1)\vee  \bigvee_{i\in[k]}\prt(b_i a_{i+1}), 
\end{align*}
and $\delta :=\prt(a_1\dots a_k; b_1\dots b_k; c a_{k+1})$. Then $\Phi:=\set{\alpha,\beta,\gamma,\delta}$ generates $\Part D$.
\end{lemma}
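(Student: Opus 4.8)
The plan is to reduce the whole statement to Lemma~\ref{lemma:evenhtwo} by deleting the single new point $d$. Write $C:=D\setminus\set d$; this is precisely the $(2k+2)$-element set to which Lemma~\ref{lemma:evenhtwo} applies, since the hypothesis $k\geq 3$ here is stronger than the $k\geq 2$ needed there. Let $\Phi_C=\set{\alpha_C,\beta_C,\gamma_C,\delta_C}$ be the generating set of $\Part C$ furnished by Lemma~\ref{lemma:evenhtwo}, and let $g\colon\Part C\to\Part D$ be the natural lattice embedding adjoining the singleton block $\set d$. Comparing the definitions, $\alpha$ and $\delta$ of the present lemma already have $d$ as a singleton block and otherwise coincide with $\alpha_C$ and $\delta_C$; hence, writing $S:=[\Phi]$, we have $g(\alpha_C)=\alpha\in S$ and $g(\delta_C)=\delta\in S$ immediately.

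The genuine obstacle is that $d$ is a \emph{non-singleton} point of both $\beta$ and $\gamma$ (it lies in $\set{d,a_1,b_2}$ and in $\set{d,b_1,a_2}$), so—unlike $c$ in the earlier lemmas—it cannot be peeled off by meeting with $\beta\vee\gamma$. Instead I would peel it off using $\alpha$ and $\delta$: I expect $\nu:=\alpha\vee\delta$ to have exactly the two blocks $\set d$ and $D\setminus\set d$, whence $\beta\wedge\nu=g(\beta_C)$ and $\gamma\wedge\nu=g(\gamma_C)$ lie in $S$. This gives $g(\Phi_C)\subseteq S$; since $g$ is an embedding and $\Phi_C$ generates $\Part C$ by Lemma~\ref{lemma:evenhtwo}, it follows that $g(\Part C)\subseteq S$, i.e.\ $\prt(xy)\in S$ for all distinct $x,y\in D\setminus\set d$.

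It then remains to re-attach $d$, i.e.\ to produce two edges joining $d$ to the rest; here $\beta,\gamma$ must be used, as they are the only generators meeting $d$ non-trivially. I would note $\prt(a_1b_1)\in S$ (a transposition inside $C$) and then check
\[
\prt(d a_1)=\beta\wedge\bigl(\prt(a_1 b_1)\vee\gamma\bigr)\in S,\qquad
\prt(d b_1)=\gamma\wedge\bigl(\prt(a_1 b_1)\vee\beta\bigr)\in S,
\]
the point being that $\prt(a_1 b_1)\vee\gamma$ enlarges the $\gamma$-block of $d$ to $\set{a_1,d,b_1,a_2}$, whose intersection with the $\beta$-block $\set{d,a_1,b_2}$ is exactly $\set{d,a_1}$ while every other $\beta$-block is shredded (and symmetrically for the second identity). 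Finally, having all transpositions of $D\setminus\set d$ together with $\prt(d a_1)$ and $\prt(d b_1)$, I would apply Lemma~\ref{lemma:circle} to a cyclic ordering of $D$ in which the two neighbours of $d$ are $a_1$ and $b_1$; this yields every transposition of $D$, hence $S=\Part D$. The step demanding the most care is the block-intersection bookkeeping establishing $\nu=\prt(D\setminus\set d)$ and the two displayed meets, but these are routine once the shape of $\nu$ is pinned down.
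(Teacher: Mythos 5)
Your proposal is correct and follows essentially the same route as the paper's own proof: both reduce to Lemma~\ref{lemma:evenhtwo} by meeting the generators with $\nu=\alpha\vee\delta$ (whose blocks are $\set d$ and $D\setminus\set d$) to land in $g(\Part C)$, then recover $\prt(d a_1)=\beta\wedge\bigl(\prt(a_1b_1)\vee\gamma\bigr)$ and $\prt(d b_1)=\gamma\wedge\bigl(\beta\vee\prt(a_1b_1)\bigr)$, and finish with Lemma~\ref{lemma:circle}. The only difference is that you spell out the block-counting checks that the paper leaves implicit.
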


\stskip

\begin{proof} We apply the same technique as in the previous proof, but now we derive the statement from  Lemma \ref{lemma:evenhtwo}. The objects defined in Lemma \ref{lemma:evenhtwo} will be subscripted by $C$, and the natural embedding $g\colon\Part C\to \Part D$ is defined analogously to $f$ in the previous proof. The sublattice generated by $\Phi$ is denoted by $S$.
With $\nu:=\alpha\vee \delta$, we have that $g(\kappa_C)=\kappa\wedge \nu\in S$ for every $\kappa\in \Phi$. Since $\set{\kappa_C:\kappa\in \Phi}$ generates $\Part C$ by Lemma \ref{lemma:evenhtwo}, we obtain that $g(\Part C)\subseteq S$. Hence, for any  $x,y\in D\setminus\set d=C$ such that $x\neq y$,  $\prt(x y)$ belongs to $S$. Thus,
$\prt(a_1 d)=\beta\wedge\bigl(\prt(a_1 b_1)\vee\gamma \bigr)\in S$ and 
$\prt(d b_1)=\gamma\wedge\bigl(\beta\vee \prt(a_1 b_1)\bigr)$. Therefore, we can apply Lemma \ref{lemma:circle} to conclude Lemma \ref{lemma:oddhtwo} so that, say, we take the pairs $(b_1,c)$ and $(c,b_2)$ instead of the edge $(b_1,b_2)$ on the perimeter.  
\end{proof}

\stskip

\begin{lemma}(Z\'adori \cite{zadori}) \label{lemma:ZLgen}
For $5\leq n\in\Nplu$, $\prt(\set{1,2}; \set{3,4})$ belongs to a four-element generating set of $\Part n$.
\end{lemma}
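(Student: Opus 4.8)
The plan is to reduce Lemma~\ref{lemma:ZLgen} to the four preceding lemmas by a simple parity case-split on $n$, since the partition $\prt(\set{1,2};\set{3,4})$ is exactly a height-$2$ partition of type $2+2$. Observe first that each of Lemmas~\ref{lemma:oddat}, \ref{lemma:evenat}, \ref{lemma:evenhtwo}, and \ref{lemma:oddhtwo} produces a concrete four-element generating set of $\Part A$ for some set $A$ whose cardinality runs through an arithmetic progression. Indeed, reading off the size constraints: Lemma~\ref{lemma:oddat} covers $n=2k+1$ with $k\geq 2$, i.e.\ every odd $n\geq 5$; Lemma~\ref{lemma:evenat} covers $n=2k+2$ with $k\geq 3$, i.e.\ every even $n\geq 8$; Lemma~\ref{lemma:evenhtwo} covers $n=2k+2$ with $k\geq 2$, i.e.\ every even $n\geq 6$; and Lemma~\ref{lemma:oddhtwo} covers $n=2k+3$ with $k\geq 3$, i.e.\ every odd $n\geq 9$. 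Thus every $n\geq 5$ is attainable, and the question is only which lemma to invoke so that the designated generator is forced to be of type $2+2$.

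Next I would pick out the lemma whose $\alpha$ (or another listed generator) is a height-$2$ partition of type $2+2$. In Lemmas~\ref{lemma:oddat} and \ref{lemma:evenat} the partition $\alpha=\prt(a_1 b_1)$ is an atom, and in Lemmas~\ref{lemma:evenhtwo} and \ref{lemma:oddhtwo} the partition $\alpha=\prt(a_1 b_1 c)$ has a three-element block, hence is of type~$3$. So $\alpha$ itself is never of type $2+2$; instead I would look for a type-$2+2$ partition already sitting inside one of the generating sets, or else lightly relabel. The cleanest route is to use Lemma~\ref{lemma:evenhtwo} for even $n$ and Lemma~\ref{lemma:oddhtwo} for odd $n$: in both, $\delta=\prt(a_1\dots a_k; b_1\dots b_k; c a_{k+1})$ has large blocks, but the construction also manufactures, inside $S$, every transposition $\prt(xy)$, so in particular $\prt(a_1 a_2)$ and $\prt(a_3 a_4)$ are available --- but these are atoms, not a single type-$2+2$ element of the generating set. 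The correct observation is that I must produce a four-element generating set \emph{containing} the given type-$2+2$ partition as one of its four members, not merely inside the generated sublattice.

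Therefore the real strategy is to relabel the underlying set $A$ so that the type-$2+2$ partition $\prt(\set{1,2};\set{3,4})$ coincides with one of the four listed generators. I would accomplish this by a symmetry argument: since $\Part n$ is fully symmetric under the symmetric group $S_n$ acting on $[n]$, any height-$2$ type-$2+2$ partition is the image of any other under a relabeling automorphism. Hence it suffices to exhibit \emph{one} four-element generating set of $\Part n$ containing \emph{some} type-$2+2$ partition; then conjugating the whole generating set by the appropriate permutation carries that member exactly to $\prt(\set{1,2};\set{3,4})$, while generation is preserved because automorphisms map generating sets to generating sets. The main obstacle is thus locating, for each parity of $n\geq 5$, a lemma-produced generating set that actually lists a type-$2+2$ partition among $\set{\alpha,\beta,\gamma,\delta}$; I expect this requires either reading $\delta$-restricted joins such as $\beta_0\wedge\delta$ out of an existing construction, or a tiny auxiliary modification of one of the four lemmas in which the ``base'' atom $\prt(a_1 b_1)$ is replaced by the type-$2+2$ partition $\prt(a_1 b_1; a_2 b_2)$ while checking that the inductive edge-recovery in \eqref{align:odda}--\eqref{eq:VjrW6gh} still goes through. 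Once that single construction is in hand, the parity split plus the $S_n$-conjugation finishes the proof immediately.
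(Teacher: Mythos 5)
Your proposal correctly diagnoses the central difficulty --- none of Lemmas~\ref{lemma:oddat}--\ref{lemma:oddhtwo} lists a type-$2+2$ partition among its four generators, and by the $S_n$-symmetry of $\Part n$ it would suffice to exhibit, for each $n\geq 5$, \emph{one} four-element generating set containing \emph{some} type-$2+2$ member --- but it stops exactly where the actual work begins. The construction you defer to (\quotat{replace the base atom $\prt(a_1 b_1)$ by $\prt(a_1 b_1; a_2 b_2)$ and check that the inductive edge-recovery still goes through}) is not a routine verification: the very first steps of the recovery break. For instance, in \eqref{align:oddot} one has $\prt(a_1 a_2)=(\alpha\vee\gamma_0)\wedge\delta$ only because $\alpha$ is the single edge $(a_1,b_1)$; with $\alpha=\prt(a_1 b_1; a_2 b_2)$ the join $\alpha\vee\gamma_0$ already glues $a_1,b_1,a_2,b_2,a_3$ into one block, so $(\alpha\vee\gamma_0)\wedge\delta$ contains $\prt(a_1a_2a_3;b_1b_2)$ rather than the atom $\prt(a_1 a_2)$, and the whole chain \eqref{align:oddot}--\eqref{align:oddtket} would have to be redesigned, not merely rechecked. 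So the key object of the lemma --- the generating set itself --- is never produced, and this is a genuine gap rather than an omitted routine detail.

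The paper takes an entirely different route: Lemma~\ref{lemma:ZLgen} is Z\'adori's theorem, quoted from \cite{zadori}, where an explicit four-element generating set containing a type-$2+2$ partition is constructed for $n\geq 7$; the paper only supplies the missing small cases $n\in\set{5,6}$, and it does so not via Lemmas~\ref{lemma:oddat}--\ref{lemma:oddhtwo} but via the sporadic Lemmas~\ref{lemma:n5ath2} and~\ref{lemma:n6atm}, whose generating sets already contain the type-$2+2$ partition $\prt(25;34)$ (see \eqref{n5ath23} and \eqref{n6atm2}); your symmetry/conjugation step then indeed finishes those two cases. If you want a self-contained argument along your lines, you would either have to reprove Z\'adori's construction or carry out and verify the modification you sketch --- neither of which your proposal does.
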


\stskip

Note that \emph{explicitly}, \cite{zadori} contains this statement only for $n\geq 7$; see the last but one paragraph on page 583 and the last paragraph above the bibliographic section on page 585 in \cite{zadori}. 
However, this should not pose any trouble, as each of the following three independent reasons is sufficient in itself. 
First, Z\'adori's construction remains valid for all $n\geq 5$ if we modify his  $U_4$ to a smaller partition of type $2+2$.  Second, this modification occurs in  (2.8) and Lemma 2.3 in \cite{czgDEBRauth}. Third, in the present paper, 
\eqref{n5ath23} in Lemma \ref{lemma:n5ath2} and 
\eqref{n6atm2} in Lemma \ref{lemma:n6atm} take care of $n\in\set{5,6}$.

\stskip

Next, we present four lemmas to settle some sporadic cases.

\begin{lemma}\label{lemma:nis4}
Each of the sets $\Phi_4:=\set{\prt(12),\prt(23),\prt(34),\prt(41)}$ and $\Psi_4=\{
\prt(12;34)$, $\prt(23)$, $\prt(124)$, $\prt(134)\}$ generates $\Part 4$. 
\end{lemma}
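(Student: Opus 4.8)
The plan is to verify each of the two claims separately by showing that every atom $\prt(ij)$ of $\Part 4$ lies in the generated sublattice, and then invoking the fact that $\Part 4$ is generated by its atoms (equivalently, by appealing to Lemma \ref{lemma:circle}).

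For the set $\Phi_4=\set{\prt(12),\prt(23),\prt(34),\prt(41)}$, the argument is immediate: $\Phi_4$ is exactly the set $Y$ from Lemma \ref{lemma:circle} with $k=4$ and $(a_1,a_2,a_3,a_4)=(1,2,3,4)$, since it consists of the four \quotat{consecutive} atoms $\prt(a_i a_{i+1})$ together with $\prt(a_4 a_1)$. As $|[4]|=4=k$, the concluding sentence of Lemma \ref{lemma:circle} gives at once that $Y=\Phi_4$ generates $\Part 4$. So the first half needs essentially no computation.

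For $\Psi_4=\set{\prt(12;34),\prt(23),\prt(124),\prt(134)}$ there is genuine work, since the generators are not all atoms. The strategy is to extract enough atoms of $\Part 4$ from meets and joins of the four given partitions so that Lemma \ref{lemma:circle} (with $(a_1,a_2,a_3,a_4)=(1,2,3,4)$) applies. First I would note that $\prt(23)$ is already a generator. Then I would try to pull out further atoms; natural candidates are $\prt(12)$ and $\prt(34)$ obtained by meeting $\prt(12;34)$ against suitable joins, for instance $\prt(12)=\prt(12;34)\wedge(\prt(124)\vee\prt(23))$ and similarly $\prt(34)=\prt(12;34)\wedge(\prt(134)\vee\prt(23))$ — here one checks that $\prt(124)\vee\prt(23)$ merges $\{1,2,4\}$ with $\{3\}$ via the $3$-$2$ link into the full set minus a block that still separates the pair correctly, so the meet with $\prt(12;34)$ isolates the desired two-element block. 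I would also recover $\prt(14)$ as $\prt(124)\wedge\prt(134)$, since $\{1,2,4\}\cap\{1,3,4\}=\{1,4\}$ and the remaining elements stay singletons under the meet. Once $\prt(12),\prt(23),\prt(34),\prt(14)$ are all in the sublattice, these are the four perimeter edges of the $4$-cycle $1\text{-}2\text{-}3\text{-}4\text{-}1$, so Lemma \ref{lemma:circle} yields every atom $\prt(ij)$ and hence all of $\Part 4$.

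The main obstacle is purely the bookkeeping of the meet computations in the second half: one must verify that the chosen joins give partitions whose intersection with $\prt(12;34)$ (or with each other) produces precisely a single two-element block and no accidental extra merging. This is routine block-chasing rather than conceptual difficulty, so I would present each identity with its one-line justification and let Lemma \ref{lemma:circle} close the argument. The only point requiring slight care is confirming that $\prt(124)\vee\prt(23)$ does not already collapse the $1$-$2$ pair with $3$ or $4$ in a way that ruins the meet; a direct check of the transitive closure of the edge set settles this.
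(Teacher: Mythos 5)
Your treatment of $\Phi_4$ is exactly the paper's: it is the set $Y$ of Lemma \ref{lemma:circle} with $k=4$, so nothing more is needed. Your overall strategy for $\Psi_4$ — extract the four perimeter atoms $\prt(12)$, $\prt(23)$, $\prt(34)$, $\prt(41)$ and then invoke Lemma \ref{lemma:circle} — is also the paper's strategy, and your identity $\prt(14)=\prt(124)\wedge\prt(134)$ is correct.

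However, two of your three proposed identities fail. In $\Part 4$ the join $\prt(124)\vee\prt(23)$ is the \emph{top} element: merging the block $\set{1,2,4}$ with $3$ via the edge $2$--$3$ puts all of $\set{1,2,3,4}$ into a single block, so there is no \quotat{full set minus a block} left over. Consequently
$\prt(12;34)\wedge\bigl(\prt(124)\vee\prt(23)\bigr)=\prt(12;34)\wedge\mathbf 1_{\Part 4}=\prt(12;34)$,
not $\prt(12)$. The same collapse occurs for $\prt(134)\vee\prt(23)$, so your formula for $\prt(34)$ also returns $\prt(12;34)$. You flagged precisely this danger in your last paragraph (\quotat{confirming that $\prt(124)\vee\prt(23)$ does not already collapse the $1$-$2$ pair with $3$ or $4$}) but then asserted the check succeeds when in fact it does not. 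The repair is immediate and is what the paper does: the joins are unnecessary, since the bare meets already work, namely $\prt(12)=\prt(12;34)\wedge\prt(124)$ (because $\set{1,2}\cap\set{1,2,4}=\set{1,2}$ while $\set{3,4}\cap\set{1,2,4}=\set 4$) and $\prt(34)=\prt(12;34)\wedge\prt(134)$. With these two corrections your argument closes exactly as you intended.
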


\stskip

\begin{proof} For $\Phi_4$, Lemma \ref{lemma:circle} immediately applies. Let $S$ be the sublattice generated by $\Psi_4$. Then $\prt(12)=\prt(12;34)\wedge\prt(124)\in S$, $\prt(34)=\prt(12;34)\wedge \prt(134)\in S$, and $\prt(41)=\prt(124)\wedge\prt(134)\in S$ show that $\Phi_4\subseteq S$. Hence, $S=\Part 4$, as required.
\end{proof}

\stskip

\begin{lemma}\label{lemma:n5ath2} With
\allowdisplaybreaks{%
\begin{align}
 \alpha&=\prt(123)  , \label{n5ath21} \\
 \beta&=\prt(35)  , \label{n5ath22} \\
 \gamma&=\prt(25;34)  , \text{ and }\label{n5ath23} \\
 \delta&=\prt(145)  , \label{n5ath24}
\end{align} 
}%\allowdisplaybreaks
$\Phi_5:=\set{\alpha,\beta,\gamma,\delta}$ generates $\Part 5$.
\end{lemma}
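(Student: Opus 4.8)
The plan is to show that the sublattice $S$ generated by $\Phi_5=\set{\alpha,\beta,\gamma,\delta}$ in $\Part 5$ equals $\Part 5$. Since $\Part 5$ is generated by the ten atoms $\prt(ij)$ with $1\le i<j\le 5$, it suffices to produce all of these atoms inside $S$ by repeated joins and meets of the four given partitions. The underlying strategy mirrors Lemma \ref{lemma:circle}: if I can realize enough atoms of the form $\prt(i,i+1)$ (or any connected ``circle'' of transpositions on $\set{1,2,3,4,5}$) inside $S$, then Lemma \ref{lemma:circle} will hand me all remaining atoms and hence all of $\Part 5$.

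The first concrete steps are to extract small atoms directly from the data. I would begin by noting that $\beta=\prt(35)$ is already an atom in $S$. Next I would try to peel atoms out of the height-$2$ and height-$3$ members by meeting them against joins of the others; for instance, $\gamma=\prt(25;34)$ should yield $\prt(34)$ and $\prt(25)$ after intersecting with suitably chosen partitions that separate the two pairs, and $\alpha=\prt(123)$ together with $\delta=\prt(145)$ should let me isolate transpositions such as $\prt(12)$, $\prt(14)$, or $\prt(45)$ by meets like $\alpha\wedge(\beta\vee\gamma)$ or $\delta\wedge(\beta\vee\gamma)$. Because $\alpha$ and $\delta$ share exactly the element $1$, their meet $\alpha\wedge\delta$ collapses to the bottom, but their join $\alpha\vee\delta=\prt(12345)$ is the top element $\mathbf 1_{\Part 5}$, which is a useful building block. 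The key is to find, for each target atom, a pair of already-available partitions whose meet is exactly that atom.

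Once I have assembled a collection of atoms that forms a connected graph on the vertex set $\set{1,2,3,4,5}$ — concretely, something like $\prt(35)$ from $\beta$, $\prt(34)$ and $\prt(25)$ from $\gamma$, and a couple more from $\alpha$ and $\delta$ — I would invoke Lemma \ref{lemma:circle} on the resulting cycle (or chain augmented to a cycle) to conclude that every $\prt(ij)$ lies in $S$, whence $S=\Part 5$. I expect the main obstacle to be bookkeeping: with only five elements and four rather specific generators (two atoms of low height, one height-$2$ partition, one height-$3$ partition), the meets and joins can accidentally overshoot or undershoot, so the delicate part is choosing, for each atom, the \emph{exact} combination of generators whose meet isolates precisely one transposition rather than a larger partition. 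In such a small lattice this is a finite check, and I would verify each claimed identity $\prt(ij)=(\cdots)\wedge(\cdots)$ by direct computation, but the real work is discovering the right five or six identities that together connect all five points.
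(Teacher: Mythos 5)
Your overall strategy is exactly the one the paper uses: derive enough atoms $\prt(i\,j)$ inside $S$ to form a cycle through all five points, then invoke Lemma \ref{lemma:circle}. Some of your concrete guesses are even correct as stated: $\alpha\wedge(\beta\vee\gamma)=\prt(123)\wedge\prt(2345)=\prt(23)$ and $\delta\wedge(\beta\vee\gamma)=\prt(145)\wedge\prt(2345)=\prt(45)$ both hold, and $\prt(34)=\gamma\wedge(\beta\vee\delta)$ works as well since $\beta\vee\delta=\prt(1345)$. Together with $\beta=\prt(35)$ itself, this already gives four usable edges.

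However, as written the proposal is a plan rather than a proof: you explicitly defer \quotat{the real work of discovering the right five or six identities}, and the one identity that is genuinely nontrivial is left undone. The edges you have ($35$, $23$, $34$, $45$) leave the vertices $1$ and $2$ insufficiently connected; you still need an edge at $1$ and a second edge at $2$ (or the single edge $\prt(12)$ plus one at $1$) to close a Hamiltonian cycle. The atom $\prt(15)=\delta\wedge(\alpha\vee\beta)$ is easy enough, but $\prt(12)$ cannot be obtained as a meet of two partitions built in one step from the generators: the paper needs the two-stage construction $\prt(125;34)=\gamma\vee\prt(15)$ followed by $\prt(12)=\alpha\wedge\prt(125;34)$, i.e., it must first manufacture an auxiliary non-atom $\prt(125;34)$ from a previously derived atom before the final meet isolates $\prt(12)$. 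Your text never identifies this step (and your suggestion that $\gamma$ \quotat{should yield $\prt(25)$} is neither verified nor needed). Since exhibiting these identities is the entire content of the lemma, the proposal has a genuine gap until the full chain $\prt(12),\prt(23),\prt(34),\prt(45),\prt(51)\in S$ is written out and checked.
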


\stskip

\begin{proof} Let $S$ denote the sublattice generated by $\Phi$. The following elements belong to $S$:
\allowdisplaybreaks{%
\begin{align}
\prt(1235)&=\prt(123)\vee \prt(35)  \text{ by \eqref{n5ath21} and \eqref{n5ath22}}, \label{n5ath25} \\
\prt(2345)&=\prt(35)\vee \prt(25;34)  \text{ by \eqref{n5ath22} and \eqref{n5ath23}}, \label{n5ath28} \\
\prt(1345)&=\prt(35)\vee \prt(145)  \text{ by \eqref{n5ath22} and \eqref{n5ath24}}, \label{n5ath29} \\
\prt(23)&=\prt(123)\wedge \prt(2345)  \text{ by \eqref{n5ath21} and \eqref{n5ath28}}, \label{n5ath210} \\
\prt(34)&=\prt(25;34)\wedge \prt(1345)  \text{ by \eqref{n5ath23} and \eqref{n5ath29}}, \label{n5ath213} \\
\prt(15)&=\prt(145)\wedge \prt(1235)  \text{ by \eqref{n5ath24} and \eqref{n5ath25}}, \label{n5ath214} \\
\prt(45)&=\prt(145)\wedge \prt(2345)  \text{ by \eqref{n5ath24} and \eqref{n5ath28}}, \label{n5ath215} \\
\prt(125;34)&=\prt(25;34)\vee \prt(15)  \text{ by \eqref{n5ath23} and \eqref{n5ath214}}, \text{ and}\label{n5ath222} \\
\prt(12)&=\prt(123)\wedge \prt(125;34)  \text{ by \eqref{n5ath21} and \eqref{n5ath222}}. \label{n5ath234}
\end{align}
}%
Hence, $\prt(12)\in S$ by \eqref{n5ath234}, $\prt(23)\in S$ by \eqref{n5ath210}, $\prt(34)\in S$ by \eqref{n5ath213}, $\prt(45)\in S$ by \eqref{n5ath215}, and $\prt(51)\in S$ by \eqref{n5ath214}. Thus, $\Phi_5$ generates $\Part 5$ by Lemma \ref{lemma:circle}.
\end{proof}

\stskip

The author has developed a computer program package called equ2024p, which is available on his website  \href{https://www.math.u-szeged.hu/~czedli/}{https://www.math.u-szeged.hu/$\sim$czedli/} \ = \ \href{https://tinyurl.com/g-czedli/}{https://tinyurl.com/g-czedli/}.  Instead of the proof above, one can use this package to compute the sublattice generated by $\Phi_5$. 
However, providing a rigorous proof that the program package operates correctly would be extremely difficult, essentially more difficult than verifying the entire paper (including the Appendix)  with all its proofs. 
Therefore, we have elaborated some humanly readable proofs like the one above. As the reader would hardly find more such proofs worth reading, the proofs of the following two lemmas go to the (Appendix) Section \ref{sect:appendix}; they are longer than the proof above.

\begin{lemma}\label{lemma:n6atm} With
\allowdisplaybreaks{%
\begin{align}
 \alpha&=\prt(12)  , \label{n6atm1} \\
 \beta&=\prt(25;34)  , \label{n6atm2} \\
 \gamma&=\prt(13;56)  , \text{ and}\label{n6atm3} \\
 \delta&=\prt(24;36)  , \label{n6atm4}
 \end{align}
 }%\allowdisplaybreaks
$\Phi_6:=\set{\alpha,\beta,\gamma,\delta}$ generates $\Part 6$.
\end{lemma}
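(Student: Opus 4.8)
The plan is to use Lemma~\ref{lemma:circle} as the finishing tool, exactly as in the preceding lemmas: if I can produce all the ``adjacent'' transposition-type atoms $\prt(i,i{+}1)$ (together with the wrap-around $\prt(61)$, or some other cycle through all six points) as elements of the sublattice $S$ generated by $\Phi_6$, then $S=\Part 6$. So the whole problem reduces to generating enough two-element-block atoms $\prt(x,y)$ from $\alpha=\prt(12)$, $\beta=\prt(25;34)$, $\gamma=\prt(13;56)$, and $\delta=\prt(24;36)$ using only joins and meets.

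First I would compute a small stock of joins to obtain larger blocks, and then cut them back down with meets to isolate single pairs, just as in the proof of Lemma~\ref{lemma:n5ath2}. The atom $\prt(12)=\alpha$ is free. To get the others I would look for two generators whose blocks overlap in exactly one point, so that a join produces a three- or four-element block, and then intersect two such joins (or a join with a generator) whose ``new'' edges share precisely the desired pair. For instance $\beta\vee\gamma$ and $\beta\vee\delta$ and $\gamma\vee\delta$ each merge the two $2{+}2$-partitions along a common point, and meeting these composite partitions against $\alpha$, $\beta$, $\gamma$, $\delta$, or against each other, should peel off the individual transpositions $\prt(34)$, $\prt(13)$, $\prt(56)$, $\prt(24)$, $\prt(36)$, $\prt(25)$ one at a time. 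Once a few atoms are in hand, further meets of the form $\bigl(\prt(xy)\vee\kappa\bigr)\wedge\kappa'$ generate still more, in the same incremental style as equations \eqref{n5ath25}--\eqref{n5ath234}.

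The concrete goal is to exhibit a cycle covering all of $\set{1,2,3,4,5,6}$ whose edges are all shown to lie in $S$. A natural target cycle is $1$--$2$--$5$--$6$--$3$--$4$--$1$, since the pairs $\prt(12)$, $\prt(25)$, $\prt(56)$, $\prt(13)$ (giving $6$--$3$ via $56$ and $13$), $\prt(34)$, and $\prt(24)$/$\prt(14)$ are closely tied to the given blocks; I would pick whichever Hamiltonian cycle makes every edge cheapest to produce and then invoke Lemma~\ref{lemma:circle} with $k=6$. Because the four generators are laid out symmetrically around the six points, I expect several of the computations to come in mirror-image pairs, halving the bookkeeping.

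The main obstacle I anticipate is that $n=6$ is small and the four partitions are irregular (two atoms-of-height-two of type $2{+}2$, one genuine atom $\prt(12)$, and one more $2{+}2$), so there is no clean inductive ``ladder'' as in Lemma~\ref{lemma:oddat}; instead one must find, by hand, the right short sequence of join/meet combinations that separates out each transposition. The delicate points are the meets that must isolate a \emph{single} pair: I have to verify that the two partitions being intersected agree on exactly the intended pair and on nothing else beyond singletons, which is precisely the kind of step that is error-prone and is why the authors relegate this verification to the Appendix. My plan, accordingly, is to organize the derivation as a numbered list of memberships \quotat{$\in S$}, each justified by one join or one meet of earlier entries, terminating in enough atoms to close a cycle and apply Lemma~\ref{lemma:circle}.
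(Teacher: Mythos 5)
Your high-level plan coincides with the paper's actual proof: the paper also works entirely inside the sublattice $S$ generated by $\Phi_6$, first forms joins of pairs of generators to create larger blocks (e.g.\ $\prt(125;34)=\alpha\vee\beta$, $\prt(23456)=\beta\vee\delta$, $\prt(1356;24)=\gamma\vee\delta$), then uses meets of these composite partitions with the generators and with each other to isolate single transpositions (e.g.\ $\prt(56)=\gamma\wedge\prt(23456)$, $\prt(15)=\prt(125;34)\wedge\prt(1356;24)$, $\prt(14)=\prt(124;36)\wedge\prt(134;256)$), and finally closes the Hamiltonian cycle $1$--$2$--$4$--$3$--$6$--$5$--$1$ and invokes Lemma~\ref{lemma:circle}. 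So the strategy, the finishing lemma, and even the shape of the intermediate steps are exactly what the paper does.

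The problem is that you never carry the plan out. For this lemma the strategy is essentially forced and carries no content; the entire substance of the statement is the explicit, verifiable chain of join/meet identities, and your proposal replaces it with ``should peel off the individual transpositions'' and ``I would pick whichever Hamiltonian cycle makes every edge cheapest to produce.'' That is precisely the part you yourself identify as delicate and error-prone, and it is not automatic: one must check, partition by partition, that each meet collapses to exactly the intended pair (for instance, that $\prt(125;34)\wedge\prt(1356;24)$ really is $\prt(15)$ and nothing more), and one must confirm that \emph{some} Hamiltonian cycle can actually be completed --- your tentative cycle $1$--$2$--$5$--$6$--$3$--$4$--$1$ needs $\prt(25)$ and $\prt(41)$, neither of which you derive, and your parenthetical ``giving $6$--$3$ via $56$ and $13$'' is not a membership argument (a join of $\prt(56)$ and $\prt(13)$ gives a four-element block, not the atom $\prt(63)$; you would still need a meet to cut it down). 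As it stands the proposal is a correct description of how one would search for a proof, not a proof; to complete it you must exhibit the numbered list of memberships you promise in your last sentence, as the paper does in its Appendix.
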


\stskip

\begin{lemma}\label{lemma:n7h2-b}  With 
\allowdisplaybreaks{%
\begin{align}
 \alpha&=\prt(123)  , \label{n7h2-b1} \\
 \beta&=\prt(147;56)  , \label{n7h2-b2} \\
 \gamma&=\prt(357;46)  , \text{ and} \label{n7h2-b3} \\
 \delta&=\prt(15;26;34)  , \label{n7h2-b4}
 \end{align}
 }%\allowdisplaybreaks
 $\Phi_7:=\set{\alpha,\beta,\gamma,\delta}$ generates $\Part 7$.
\end{lemma}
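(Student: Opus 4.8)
The plan is to mimic the successful strategy already used for Lemma~\ref{lemma:n5ath2}: namely, to show that enough two-element partitions $\prt(x y)$ lie in the sublattice $S$ generated by $\Phi_7$ so that a single application of Lemma~\ref{lemma:circle} forces $S=\Part 7$. Concretely, I would aim to produce in $S$ the six atoms $\prt(12),\prt(23),\prt(34),\prt(45),\prt(56),\prt(67)$ together with $\prt(71)$, since these are the edges of a $7$-cycle on $\{1,\dots,7\}$; Lemma~\ref{lemma:circle} with $k=7$ then yields every $\prt(i j)$ and hence all of $\Part 7$. (Any other single cycle through all seven points would do equally well, so there is freedom to chase whichever atoms fall out most easily.)

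First I would record the joins of the four generators and their pairwise joins, looking for coarse partitions whose meets with $\alpha,\beta,\gamma,\delta$ strip away unwanted blocks. For instance, $\alpha=\prt(123)$ meets with a suitable join to isolate $\prt(12)$ or $\prt(23)$; the three-element blocks of $\beta=\prt(147;56)$ and $\gamma=\prt(357;46)$ are the natural sources for edges involving $7$, and $\delta=\prt(15;26;34)$ already hands over $\prt(34)$ once it can be separated by meeting with an appropriate element (e.g.\ a join containing $34$ but splitting $15$ and $26$). The engine of the computation is the same meet--join alternation as in \eqref{n5ath25}--\eqref{n5ath234}: join two known small partitions to merge blocks, then meet against one of the generators (or against a previously derived partition) to cut the merged block back down to a single transposition. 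I expect to need a handful of intermediate "height-two'' partitions of type $2+2$ or type $3$ as stepping stones, exactly as $\prt(125;34)$ served in \eqref{n5ath222}.

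The step I expect to be the genuine obstacle is the bookkeeping to guarantee that each meet actually produces a clean transposition rather than a larger block. Because all four generators of $\Phi_7$ have height one or two and their non-singleton blocks overlap intricately on $\{1,\dots,7\}$, an incautious join can fuse three or more points irreversibly, and then no meet with the available elements will recover a single edge. The delicate part is therefore choosing, at each stage, a join whose result shares exactly one transposition with the generator it is about to be met with; verifying these incidences is the combinatorial heart of the argument and is why (as the text notes) this proof is deferred to the Appendix and is longer than that of Lemma~\ref{lemma:n5ath2}.

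Since a fully human-readable derivation is requested rather than an appeal to the equ2024p package, I would present the proof as an explicit numbered list of memberships "$\prt(\cdots)\in S$ by (prev.\ lines),'' each justified by a single meet or join of earlier entries, terminating once the seven cycle-edges are in hand; the final sentence then invokes Lemma~\ref{lemma:circle} to conclude $S=\Part 7$. The only real risk is length and the possibility that seven points force a somewhat longer chain than five did, but the method is guaranteed to close because $\Part 7$ is generated by any set of transpositions forming a connected graph, and I am targeting a Hamiltonian cycle.
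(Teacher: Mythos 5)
Your strategy is exactly the one the paper uses: derive seven transpositions forming a Hamiltonian cycle on $\set{1,\dots,7}$ inside the sublattice $S$ generated by $\Phi_7$, then invoke Lemma~\ref{lemma:circle} to conclude $S=\Part 7$. (The paper's own cycle is $1,2,3,7,5,6,4,1$, reached via the edges $\prt(12)$, $\prt(23)$, $\prt(37)$, $\prt(75)$, $\prt(56)$, $\prt(64)$, $\prt(41)$, and you correctly note that any Hamiltonian cycle would serve.) However, what you have written is a plan, not a proof. The entire mathematical content of this lemma is the explicit chain of meet--join identities certifying that those seven atoms actually lie in $S$; the paper needs about twenty-five intermediate memberships, such as $\prt(17)=\prt(147;56)\wedge\bigl(\prt(123)\vee\prt(357;46)\bigr)$ and $\prt(12)=\prt(123)\wedge\bigl(\prt(15;26;34)\vee\prt(47;56)\bigr)$, and none of these appears in your proposal. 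You yourself identify the bookkeeping of these incidences as \emph{the genuine obstacle} and then do not carry it out.

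Your closing claim that \quotat{the method is guaranteed to close because $\Part 7$ is generated by any set of transpositions forming a connected graph} does not fill this gap: that fact tells you what would suffice, not that the specific sublattice generated by $\alpha=\prt(123)$, $\beta=\prt(147;56)$, $\gamma=\prt(357;46)$, $\delta=\prt(15;26;34)$ actually contains such a set of transpositions. There is no a priori reason a four-element subset of $\Part 7$ generates the whole lattice --- most do not, and indeed no three-element subset does --- so the existence of the required atoms in $S$ is precisely the assertion to be proved, and it can only be established by exhibiting the terms. Until you supply the explicit numbered list of memberships you promise in your last paragraph, the proof is not complete.
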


\stskip

Now we are ready to prove the main result, Theorem \ref{thm:main}

\stskip

\begin{proof}[Proof of Theorem \ref{thm:main}] Assume that $4\leq n\in\Nplu$, and let $a,b,c,d$ be pairwise distinct elements of an $n$-element set $A$. 
If $\alpha\in \Part A$ is of height 1 or 2, then there is a permutation $\pi$ of $A$ such the automorphism of $\Part A$ determined by $\pi$ sends $\alpha$ to one of the following three partitions: $\alpha_1:=\prt(a b)$, $\alpha_2:=\prt(a b;c d)$, and $\alpha_3:=\prt(a b c)$. 
It suffices to show that each of $\alpha_1$, $\alpha_2$ and $\alpha_3$ belongs to a four-element generating set of $\Part A$ for at least one choice of $(a,b,c,d)$ and the $n$-element set $A$.
The case of  $\alpha_1$ is settled by  Lemma \ref{lemma:oddat} (for $n=5,7,9,11\dots$) , Lemma \ref{lemma:evenat} (for $n=8,10,12,\dots$), 
Lemma \ref{lemma:nis4} (for $n=4$), and Lemma \ref{lemma:n6atm} (for $n=6$). 
Lemmas \ref{lemma:ZLgen} and \ref{lemma:nis4} take care of $\alpha_2$. 
Finally, for $\alpha_3$, we can apply Lemma \ref{lemma:evenhtwo} (for $n=6,8,10,12,\dots$), 
Lemma \ref{lemma:oddhtwo} (for $n=9,11,13\dots$), Lemma \ref{lemma:nis4} (for $n=4$), Lemma \ref{lemma:n5ath2} (for $n=5$), and
Lemma \ref{lemma:n7h2-b} (for $n=7$). The proof of Theorem \ref{thm:main} is complete.
\end{proof}

\stskip
The proof of  Proposition \ref{prop:aleph0} runs as follows.

\stskip

\begin{proof}[Proof of Proposition \ref{prop:aleph0}]  No matter which set of size $\aleph_0$ and which atom in $\Part A$ are taken. Hence, we let $A:=\set{a_i:i\in \Nplu}\cup\set{b_i:i\in\Nplu}$, and define
$\alpha:=\prt(a_1 b_1)$. Let 
$\beta:=\bigvee\set{\prt(a_i b_{i+1}):i\in\Nplu}$,  
$\gamma:=\bigvee\set{\prt(b_i a_{i+1}):i\in\Nplu}$, and $\delta:=\bigvee\set{\prt(a_i a_{i+1};b_i b_{i+1}):i\in\Nplu}$.  Denote $\set{\alpha,\beta,\gamma,\delta}$ by $\Phi$. 
To visualize the construct, remove $a_9=b_9$  and its two adjacent edges from the graph drawn for Lemma \ref{lemma:oddat} on the left side of Figure \ref{figa}.
Then add \quotat{$\vdots$} (three dots) above $a_8$ and $b_8$ to indicate the continuation upwards. 
By \emph{the graph} in the present proof, we mean what we obtain in this way even though we do not draw it. Fortunately, the proof of  Lemma \ref{lemma:oddat} works with almost no changes. Now $\beta_0:=\beta\wedge(\alpha\vee \delta)$ equals $\beta$ and $\gamma_0:=\gamma\wedge(\alpha\vee \delta)$ equals $\gamma$. 
Hence, $[\Phi]$ in Proposition \ref{prop:aleph0} corresponds to $S_0$ in Lemma \ref{lemma:oddat}. 
 Clearly, \eqref{align:odda}, \dots, \eqref{align:oddtket}, and (for all $i$) \eqref{eq:mWghpstx} need no change to imply \eqref{eq:mndL9gra}.
Combining \eqref{eq:mndL9gra} with Lemma \ref{lemma:circle}, it follows that $[\Phi]$ contains all atoms of $\Part A$.  Thus, as each element of $\Pfin A$ is the join of finitely many atoms of $\Part A$, we have that $\Pfin A\subseteq [\Phi]$, as required. 
Furthermore, since each element of $\Part A$ is the join of all (not necessarily finitely many) atoms below it, $[\Phi]$  generates $\Part A$ as a complete lattice. Hence, so does $\Phi$, completing the proof of Proposition \ref{prop:aleph0}.
\end{proof}

\stskip

Corollary \ref{cor:nm} follows easily from an argument given in \cite{czgirk} rather than from an easy-to-quote statement from \cite{czgirk}. To maintain brevity, we do not reproduce this argument. Instead, we present the 7-tuple to which the argument in \cite{czgirk} applies, and highlight which part of \cite{czgirk} is relevant.

\stskip

\begin{proof}[Proof of Corollary \ref{cor:nm}]
Instead of $[n]$, it suffices to take the set $A$ defined in Lemma \ref{lemma:oddat}; see also the graph on the left of Figure \ref{figa}. Let $k:=(n-1)/2$, that is, $n=2k+1$.
Difference up to automorphism does not count, whereby we can assume that $\alpha=\prt(a_1 b_1)$. Define $\beta$, $\gamma$, and $\delta$ in the same way as in  Lemma \ref{lemma:oddat}.  Denote the smallest and the largest partition of $A$ by $\botA$ and $\topA$, respectively. 
With $(u,v):=(a_k,a_{k+1})$, we have that $\beta\vee\gamma=\topA$, 
$\beta\wedge\gamma=\botA$, $\alpha\vee\delta\vee \prt(u v)=\topA$, 
$\alpha\wedge\bigl(\delta\vee \prt(u v)\bigr)=\botA$, and
$\delta\wedge\bigl(\alpha\vee \prt(u v)\bigr)=\botA$.  
Therefore, $(A;\beta,\gamma,\alpha,\delta; u,v)$ is an eligible system in the sense of 
\cite{czgirk}.  
The second part of the proof of the main theorem in \cite{czgirk} begins with a $9$-element eligible system $\mathfrak A_0$. (We can disregard the $8$-element system in \cite{czgirk}, since now $n+m$ is odd.) Instead of the $9$-element eligible system $\mathfrak A_0$, now the seven-tuple above is an $n$-element one. If we change 9 to $n$ and $\mathfrak A_0$  to the $n$-element 7-tuple defined above, then the proof in \cite{czgirk} needs no further change to yield at least 
$2^{m-3}\cdot(m-1)!/(3m+3)$ many $\Phi'$, as required. Thus, Corollary \ref{cor:nm} holds.
\end{proof}

\onlyext{

\section{Appendix}\label{sect:appendix}

\begin{proof}[Proof of Lemma \ref{lemma:n6atm}]
 Let $S$ denote the sublattice generated by $\Phi_6$.  Then  $S$ 
contains
\allowdisplaybreaks{%
\begin{align}
\prt(125;34)&=\prt(12)\vee \prt(25;34)  \text{ by \eqref{n6atm1} and \eqref{n6atm2}}, \label{n6atm5} \\
\prt(124;36)&=\prt(12)\vee \prt(24;36)  \text{ by \eqref{n6atm1} and \eqref{n6atm4}}, \label{n6atm8} \\
\prt(134;256)&=\prt(25;34)\vee \prt(13;56)  \text{ by \eqref{n6atm2} and \eqref{n6atm3}}, \label{n6atm9} \\
\prt(23456)&=\prt(25;34)\vee \prt(24;36)  \text{ by \eqref{n6atm2} and \eqref{n6atm4}}, \label{n6atm10} \\
\prt(1356;24)&=\prt(13;56)\vee \prt(24;36)  \text{ by \eqref{n6atm3} and \eqref{n6atm4}}, \label{n6atm11} \\
\prt(56)&=\prt(13;56)\wedge \prt(23456)  \text{ by \eqref{n6atm3} and \eqref{n6atm10}}, \label{n6atm13} \\
\prt(15)&=\prt(125;34)\wedge \prt(1356;24)  \text{ by \eqref{n6atm5} and \eqref{n6atm11}}, \label{n6atm14} \\
\prt(14)&=\prt(124;36)\wedge \prt(134;256)  \text{ by \eqref{n6atm8} and \eqref{n6atm9}}, \label{n6atm16} \\
\prt(124)&=\prt(12)\vee \prt(14)  \text{ by \eqref{n6atm1} and \eqref{n6atm16}}, \label{n6atm21} \\
\prt(1356)&=\prt(13;56)\vee \prt(15)  \text{ by \eqref{n6atm3} and \eqref{n6atm14}}, \label{n6atm25} \\
\prt(134;56)&=\prt(13;56)\vee \prt(14)  \text{ by \eqref{n6atm3} and \eqref{n6atm16}}, \label{n6atm26} \\
\prt(34)&=\prt(25;34)\wedge \prt(134;56)  \text{ by \eqref{n6atm2} and \eqref{n6atm26}}, \label{n6atm41} \\
\prt(24)&=\prt(24;36)\wedge \prt(124)  \text{ by \eqref{n6atm4} and \eqref{n6atm21}}, \text{ and} \label{n6atm44} \\
\prt(36)&=\prt(24;36)\wedge \prt(1356)  \text{ by \eqref{n6atm4} and \eqref{n6atm25}}. \label{n6atm45}
\end{align}
}%
Since  $\prt(12)\in S$ by \eqref{n6atm1}, $\prt(24)\in S$ by \eqref{n6atm44}, $\prt(43)\in S$ by \eqref{n6atm41}, $\prt(36)\in S$ by \eqref{n6atm45}, $\prt(65)\in S$ by \eqref{n6atm13}, and $\prt(51)\in S$ by \eqref{n6atm14},  $\Phi_6$ generates $\Part 6$, as  required.
\end{proof}

\stskip

\begin{proof}[Proof of Lemma \ref{lemma:n7h2-b}]  
The sublattice $S$ generated by $\Phi_7$ contains
\allowdisplaybreaks{%
\begin{align}
\prt(12347;56)&=\prt(123)\vee \prt(147;56)  \text{ by \eqref{n7h2-b1} and \eqref{n7h2-b2}}, \label{n7h2-b6} \\
\prt(12357;46)&=\prt(123)\vee \prt(357;46)  \text{ by \eqref{n7h2-b1} and \eqref{n7h2-b3}}, \label{n7h2-b7} \\
\prt(123456)&=\prt(123)\vee \prt(15;26;34)  \text{ by \eqref{n7h2-b1} and \eqref{n7h2-b4}}, \label{n7h2-b8} \\
\prt(17)&=\prt(147;56)\wedge \prt(12357;46)  \text{ by \eqref{n7h2-b2} and \eqref{n7h2-b7}}, \label{n7h2-b12} \\
\prt(14;56)&=\prt(147;56)\wedge \prt(123456)  \text{ by \eqref{n7h2-b2} and \eqref{n7h2-b8}}, \label{n7h2-b13} \\
\prt(37)&=\prt(357;46)\wedge \prt(12347;56)  \text{ by \eqref{n7h2-b3} and \eqref{n7h2-b6}}, \label{n7h2-b14} \\
\prt(35;46)&=\prt(357;46)\wedge \prt(123456)  \text{ by \eqref{n7h2-b3} and \eqref{n7h2-b8}}, \label{n7h2-b15} \\
\prt(34)&=\prt(15;26;34)\wedge \prt(12347;56)  \text{ by \eqref{n7h2-b4} and \eqref{n7h2-b6}}, \label{n7h2-b16} \\
\prt(15)&=\prt(15;26;34)\wedge \prt(12357;46)  \text{ by \eqref{n7h2-b4} and \eqref{n7h2-b7}}, \label{n7h2-b17} \\
\prt(1234)&=\prt(123)\vee \prt(34)  \text{ by \eqref{n7h2-b1} and \eqref{n7h2-b16}}, \label{n7h2-b25} \\
\prt(14567)&=\prt(147;56)\vee \prt(15)  \text{ by \eqref{n7h2-b2} and \eqref{n7h2-b17}}, \label{n7h2-b28} \\
\prt(34567)&=\prt(357;46)\vee \prt(34)  \text{ by \eqref{n7h2-b3} and \eqref{n7h2-b16}}, \label{n7h2-b29} \\
\prt(157;26;34)&=\prt(15;26;34)\vee \prt(17)  \text{ by \eqref{n7h2-b4} and \eqref{n7h2-b12}}, \label{n7h2-b31} \\
\prt(1456)&=\prt(14;56)\vee \prt(15)  \text{ by \eqref{n7h2-b13} and \eqref{n7h2-b17}}, \label{n7h2-b44} \\
\prt(3456)&=\prt(35;46)\vee \prt(34)  \text{ by \eqref{n7h2-b15} and \eqref{n7h2-b16}}, \label{n7h2-b48} \\
\prt(14)&=\prt(147;56)\wedge \prt(1234)  \text{ by \eqref{n7h2-b2} and \eqref{n7h2-b25}}, \label{n7h2-b52} \\
\prt(47;56)&=\prt(147;56)\wedge \prt(34567)  \text{ by \eqref{n7h2-b2} and \eqref{n7h2-b29}}, \label{n7h2-b53} \\
\prt(56)&=\prt(147;56)\wedge \prt(3456)  \text{ by \eqref{n7h2-b2} and \eqref{n7h2-b48}}, \label{n7h2-b55} \\
\prt(46;57)&=\prt(357;46)\wedge \prt(14567)  \text{ by \eqref{n7h2-b3} and \eqref{n7h2-b28}}, \label{n7h2-b58} \\
\prt(57)&=\prt(357;46)\wedge \prt(157;26;34)  \text{ by \eqref{n7h2-b3} and \eqref{n7h2-b31}}, \label{n7h2-b59} \\
\prt(46)&=\prt(357;46)\wedge \prt(1456)  \text{ by \eqref{n7h2-b3} and \eqref{n7h2-b44}}, \label{n7h2-b60} \\
\prt(1256;347)&=\prt(15;26;34)\vee \prt(47;56)  \text{ by \eqref{n7h2-b4} and \eqref{n7h2-b53}}, \label{n7h2-b100} \\
\prt(157;2346)&=\prt(15;26;34)\vee \prt(46;57)  \text{ by \eqref{n7h2-b4} and \eqref{n7h2-b58}}, \label{n7h2-b102} \\
\prt(12)&=\prt(123)\wedge \prt(1256;347)  \text{ by \eqref{n7h2-b1} and \eqref{n7h2-b100}}, \text{ and}\label{n7h2-b183} \\
\prt(23)&=\prt(123)\wedge \prt(157;2346)  \text{ by \eqref{n7h2-b1} and \eqref{n7h2-b102}}. \label{n7h2-b184}
\end{align}
}%
So  $\prt(12)\in S$ by \eqref{n7h2-b183}, $\prt(23)\in S$ by \eqref{n7h2-b184}, $\prt(37)\in S$ by \eqref{n7h2-b14}, $\prt(75)\in S$ by \eqref{n7h2-b59}, $\prt(56)\in S$ by \eqref{n7h2-b55}, $\prt(64)\in S$ by \eqref{n7h2-b60}, and $\prt(41)\in S$ by \eqref{n7h2-b52}. Thus,  Lemma \ref{lemma:circle} concludes the proof of 
Lemma \ref{lemma:n7h2-b} follows from Lemma \ref{lemma:circle}.
 \end{proof}

\stskip

} %end of \onlyext

\section*{Acknowledgements}
This research was supported by the National Research, Development and Innovation
Fund of Hungary, under funding scheme K 138892.

\bibliographystyle{plain}
\bibliography{czgh2pgen}

\end{document}